\newtheorem{theorem}{Theorem}
\newtheorem{corollary}[theorem]{Corollary}
\newtheorem{definition}[theorem]{Definition}
\newtheorem{lemma}[theorem]{Lemma}
\newtheorem{proposition}[theorem]{Proposition}
\newtheorem{remark}[theorem]{Remark}
\newenvironment{proof}[1][Proof]{\noindent\textbf{#1.} }{\ \rule{0.5em}{0.5em}}
\begin{document}

\title{The differential of self-consistent transfer operators and the local
convergence to equilibrium of mean field strongly coupled dynamical systems }
\author{Roberto Castorrini \\
Scuola Normale Superiore, roberto.castorrini@sns.it\\ \\
Stefano Galatolo \\
Dipartimento di Matematica, Università di Pisa, stefano.galatolo@unipi.it\\ \\
Matteo Tanzi \\
King's College London, matteo.tanzi@kcl.ac.uk }
\maketitle

\begin{abstract}
We consider the differential of a self-consistent transfer operator at a fixed point of the operator itself and show that its spectral properties can be used to establish a kind of local exponential convergence to equilibrium: probability measures near the fixed point converge
exponentially fast to the fixed point by the iteration of the transfer operator. This holds also in the strong
coupling case. We also show that for mean field coupled systems satisfying
uniformly a Lasota-Yorke inequality the differential does also. We present
examples of application of the general results to  self-consistent transfer operators based on deterministic expanding maps considered with different couplings,  outside the weak coupling regime.
\end{abstract}

\tableofcontents

\section{Introduction}

The dynamics of mean-field coupled maps exhibit a rich variety of emergent behaviors such as synchronization, clustering, partial ordering, turbulent/chaotic phases, and ergodicity breaking. These phenomena have been observed in natural settings and through numerical simulations since the end of the '80s (see for example \cite{PK,NK,Kan2,Kan,Just2,Just}  and \cite{BUMI} for a review). Yet, a comprehensive and general mathematical understanding remains elusive.

Most rigorous results have been obtained in the thermodynamic limit where the number of coupled systems tends to infinity. In this limit the state of the system is described by a probability measure whose dynamical evolution is given by a nonlinear mapping called a \textit{self-consistent transfer operator} (STO). The fixed points of an STO represent the equilibrium states for the system in the thermodynamic limit and provide information on the emergence of stationary behaviour for the system. It is therefore important to study existence,  stability, and the speed of convergence to these equilibrium states, as well as the changes of these equilibria under perturbations  (statistical stability, linear response, bifurcations). 

Recently, several results have appeared in the literature, mostly in the weak coupling case where the behavior of the self-consistent transfer operator is very close to linear. Existence and convergence to equilibrium states has been established for several types of uncoupled dynamics and interactions: see  {\cite{K} for }Tent maps; {\cite%
{SB} for }doubling maps and diffusive affine coupling; see {\cite{bal} for }%
smooth uniformly expanding maps and diffusive coupling; {\cite{ST} }smooth
uniformly expanding map sand smooth coupling; {\cite{BLS} coupled Anosov
maps; \cite{G} for random systems with additive noise; \cite{BK} for coupled intermittent maps. }

{Most of the results in the aforementioned works are typically derived from the spectral properties of the (linear) transfer operator associated with smooth, uniformly hyperbolic, uncoupled dynamics. However, recent studies have also explored the spectral features of these operators in cases of partial hyperbolicity (see \cite{CL}) or dynamics with discontinuities (e.g. \cite{BC}, \cite{BCC}), which are important to understand a variety of dynamics emerging in nature. Therefore, It is likely that the advances made in this and previous works will lead to equilibrium-type results even when the uncoupled maps do not exhibit smooth and uniform hyperbolicity.}

New rigorous approaches are on demand to treat the strong coupling regime where much less is understood mathematically, apart from few exceptions: {%
\cite{ST2} where synchronization results are presented; \cite{G} where
existence of equilibrium states is proved outside the weak coupling regime
and uniqueness results are proved in a kind of intermediate regime; \cite{BL}
where bifurcations and phase transitions are studied. }

It is expected that the stability of a fixed point can be deduced from the spectrum of the differential of the STO at the fixed point.   Instances of computation and use of this differential can be found in: \cite{wo}, where  the computation of the differential was used to support numerical evidence of the existence of non-hyperbolic large-scale dynamical structures in a mean-field coupled system, implying that the Gallavotti-Cohen chaotic hypothesis does not hold in that case;  in { \cite{BL} where the differential is used together with the Implicit Function Theorem  to give conditions for the continuation of the fixed point as the coupling strength increases, or  establishing the  existence of bifurcations leading to multiple invariant measures.}

However, in natural settings, when dealing with mean field coupled deterministic maps an STO is not Fr\'echet differentiable as a self-map of a given Banach space $( B, \|\cdot\|)$; it is instead differentiable if seen as mapping from a space with a strong norm, $( B_s, \|\cdot\|_s)$, to a space with a weak norm, $( B_w,\|\cdot\|_w)$, satisfying $ B_s\subset B_w$ and $\|\cdot\|_s\ge \|\cdot\|_{w}$\footnote{This loss of regularity is analogous to the loss of regularity encountered when studying linear response.}. This is a serious obstacle to establishing stability from knowledge of the spectrum of the differential.  In this paper we propose a systematic study of the differential of  STOs that overcomes this obstacle and provides sufficient conditions for the stability of equilibria.

\textbf{Overview and main results}: Let $(X,d)$ be a metric space representing the common phase space for the dynamics of the coupled subsystems. Let $PM(X)$ denote the set of Borel probability measures in $(X,d)$. The self-consistent transfer operator associated with the system is denoted by $\mathcal{L}_{\delta }:PM(X) \rightarrow PM(X)$, where $\delta\in \mathbb R$ is a parameter indicating the coupling strength ($\delta=0$ corresponds to the uncoupled case). An equilibrium state $h \in PM(X)$ is a fixed probability measure  of $\mathcal{L}_{\delta }$ ($\mathcal{L}_{\delta }h = h$).

We primarily study the behavior of $\mathcal{L}_{\delta }$ near $h$ outside the weak coupling regime. One main goal is to establish conditions under which $h$ is a stable fixed point (of $\mathcal{L}_{\delta }$ restricted to a space of suitably "regular" measures $B_{s}$) and the convergence to $h$ by iterating $\mathcal{L}_{\delta }$ is exponentially fast. This is interpreted as the exponential convergence to the equilibrium of the system.

Our main tool is the study of the differential $d\mathcal{L}_{\delta ,h}$ of $\mathcal{L}_{\delta }$ at $h$.
Let $V_s$ be the zero average space in $B_s$ (see \eqref{0avg}).
This differential can be defined as an operator $d\mathcal{L}_{\delta ,h}:V_{s}\rightarrow V_{s}$,  but in some important cases, such as the case where one is interested in mean field coupled deterministic maps, its convergence as a Fréchet differential occurs in a weaker topology. This complicates iterating the operator to prove exponential convergence to equilibrium. However, we establish that:

\begin{center}
  \emph{If one iterate of $d\mathcal{L}_{\delta ,h}$ is a strict contraction
on $V_{s}$, then some iterate of $\mathcal{L}_{\delta }$ strictly contracts a
neighborood of $h$ (see Proposition \ref{fond}).}
\end{center}

This criterion implies local exponential convergence to equilibrium for mean-field coupled systems outside the weak coupling regime, but as a corollary we also get a simple and general exponential convergence to equilibrium statement in the weak coupling regime (see Subsection \ref{weakc}).

Even though understanding the behavior of   $d\mathcal{L}_{\delta ,h}$ can be complex, we prove in Section \ref{sec:LYdiff} that if the systems we couple satisfy a certain Lasota-Yorke inequality, then $d\mathcal{L}_{\delta ,h}$ will also satisfy it.
This implies that if the functional analytic setting is chosen in a suitable way the operator has a discrete peripheral spectrum, and then the assumptions needed to apply our criterion for contraction (see Proposition \ref{fond} below)  can be verified by perturbative or reliable numerical methods.\footnote{We remark that the complicated form of the differential  seems to make the study of its spectral picture non trivial in the applications. However, because it satisfies a Lasota-Yorke inequality, it seems that in concrete examples one can test if an iterate of such an operator is a contraction on the zero-averages space by a computer-aided proof, for example, using methods similar to the ones introduced in \cite{GNS}.  }

In Section \ref{sec:3} we show an example of application of these results to concrete classes of systems. In Section \ref{ex1} we apply Proposition \ref{fond} to a class of expanding maps  with a simple deterministic coupling. We show exponential convergence to equilibrium even considering cases outside the weak coupling. 
The approach to the study of the differential here is perturbative.
We consider as uncoupled systems,  maps which are small perturbations of a map with linear branches. 
In this case, an expression 
for the differential
can be computed in a simple way
and we use this expression together 
with results on spectral stability for  quasi-compact operators  to 
derive the needed information on 
the spectrum of the differential of the STO.
Here, the fact that the differentials under study satisfy a certain uniform Lasota Yorke inequality, proved in Section \ref{sec:LYdiff}, is a key step.
In Section \ref{ex2} we consider a class of examples with a kind of sochastic coupling. In this case applying Proposition \ref{fond} we show that for small coupling strength there is only one equilibrium state, while if the coupling is strong enough, we have at least two equilibrium states having local exponential convergence to the equilibrium.

\section{Differential of the self-consistent transfer operator near a fixed point.}\label{S2}

In this section we set up the abstract framework in which we can prove our general main results. We follow  the approach of \cite{G} and \cite{BLS} where a self-consistent transfer operator is constructed starting from a family of Markov linear operators $L_{\delta, f}$ on suitable strong and weak spaces of measures, satisfying a certain list of natural assumptions.

Let us consider a metric space $X$, and the set of Borel probability measures $%
P(X)$. Let us consider the set $SM(X)$ of signed Borel measures on $%
X.$ This is a vector space, and let us consider normed vector subspaces $$%
(B_{ss},\| \cdot \|_{ss})\subseteq (B_{s},\|  \cdot \|_{s})\subseteq
(B_{w},\| \cdot\|_{w})\subseteq SM(X),$$ with $\|      ~\|_{ss}\geq \|\cdot\|_{s}\geq
\| \cdot\|_{w}.$

We denote the probability and the zero-average spaces as
\[
\begin{split}\label{0avg}
&\ P_{w}:=P(X)\cap B_{w},\qquad V_{w}:=\{\mu \in B_{w}|\mu (X)=0\}, \qquad V_{s}:=V_{w}\cap B_{s}.
\end{split}
\]
For $\tau,\tau' \in \{ss,s,w\}$, $\|\cdot\|_{\tau \to \tau'}$ will denote the operator norm from  $B_{\tau}$ to $B_{\tau'}$.\\
Let us consider $\delta \geq 0$, and  a family of Markov bounded
operators $L_{\delta ,f}:B_{w}\rightarrow B_{w}$ for any $f\in B_{w}.$ \footnote{Since we have to iterate operators of the form $L_{\delta ,f}$ for $f\in P_w$ and also take differentials of suitable nonlinear functions constructed starting from these operators, it would be sufficient to have $L_{\delta ,f}$ to be defined when $f$ ranges in a suitable small neighborhood of $P_w$. }
We
will assume that the family $L_{\delta ,f}$ preserves $B_{s}$ and $%
B_{ss}.$

Next, let us consider the \textit{Self-consistent Transfer Operator} (abbreviated by STO) $\mathcal{L}_{\delta }:P_{w}\rightarrow P_{w}$
associated to the family $L_{\delta ,f}$, defined as%
\begin{equation*}
\mathcal{L}_{\delta }(f):=L_{\delta ,f}(f).
\end{equation*}
In the above notation the parameter $\delta$  represents the strength of the coupling of the system. Its formal role will be related to the regularity of the family $L_{\delta ,f}$ as $f$ varies, as expressed in the following list of assumptions we set on  the family $L_{\delta ,f}$. 
\paragraph{Standing assumptions on $L_{\delta ,f}$.}

To study the behavior of $\mathcal{L}_{\delta }$ in a neighborhood of one of its fixed points $h$ and obtain convergence to equilibrium results, we will set list of assumptions which are natural in the context of mean field coupled systems and related operators. 
We will suppose that $h$ is regular and it is contained in $B_{ss}$; that all the operators are bounded and they satisfy a uniform sequential Lasota-Yorke estimate; that the operator $L_{\delta,h}$ associated to $h$ has convergence to equilibrium, and the family $L_{\delta,f}$ has some global Lipschitz regularity as $f$ varies.  Furthermore we will suppose some differentiable regularity for this family at the fixed point $f=h$.
\begin{description}
    
\item [(a)] (\textit{Fixed point}) There exists $h\in B_{ss}$ such that  $%
\mathcal{L}_{\delta }h=h.$
\item  [(b)](\textit{Boundedness}) There is $C\geq 0$ such that for each $ f\in B_{w}$ $\ $,%
\begin{equation}\label{e0}
\begin{split}
\|      &L_{\delta ,f}\|_{w\rightarrow w} \leq C,\\
\|     &L_{\delta ,f}\|_{s\rightarrow s} \leq C, \\
\|      &L_{\delta ,f}\|_{ss\rightarrow ss} \leq C.
\end{split}
\end{equation}

\item [(c)](\textit{Sequential Lasota Yorke inequalities}) The sequential composition of $L_{\delta ,f}~$satisfies a uniform
Lasota-Yorke inequality, that is: there are $\lambda <1$ and $A,B> 0$ such that
for each $n\in \mathbb{N}$ and each \marginpar{} $ \{f_{i}\}_{i=1}^n\in P_{w},g\in B_{s} 
\footnote{
The uniform assumption is stated for $f$ ranging in $P_{s},$ but, for the
purpose of this section, one could restrict to $f$ ranging in a small
neighborhood of $h$.}$%
\begin{equation} \label{e00}
\begin{split}
\|      L_{\delta ,f_{1}}\circ ...\circ L_{\delta ,f_{n}}(g)\|_{w} &\leq
B\|      g\|_{w},  \\ \\
\|      L_{\delta ,f_{1}}\circ ...\circ L_{\delta ,f_{n}}(g)\|_{s} &\leq A\lambda
^{n}\|      g\|_{s}+B\|      g\|_{w}.
\end{split}
\end{equation}

\item [(d)]\textit{(Convergence to equilibrium for $L_{\delta,h}$)} There exists $a_{n}\geq 0$ with $a_{n}\rightarrow 0$ such that,
for all $n\in \mathbb{N}$ and $\mu\in V_{s},$%
\begin{equation} \label{eq:Vs}
\|L_{\delta,h}^{n}(\mu)\|_{w}\leq a_{n}\|\mu\|_{s}. 
\end{equation}%

\item [(e)]\textit{(Regularity of the family of operators)} There exists $C_0>0$ such that, for each $ f,g\in B_{s}$,\marginpar{}
\begin{equation}
\|      L_{\delta ,f}(h)-L_{\delta ,g}(h)\|_{s}\leq \delta C_0\|      h\|_{ss}\|      f-g\|_{w},
\label{e1}
\end{equation}
 and there is $C_1>0$ such that, for each $f_{1},f_{2}\in B_{w},$~$g\in B_{s}$ \marginpar{}
\begin{equation}
\|      L_{\delta ,f_{1}}(g)-L_{\delta ,f_{2}}(g)\|_{w}\leq \delta
C_1\|      g\|_{s}\|      f_{1}-f_{2}\|_{w}.  \label{e1.5}
\end{equation}

\end{description}

We note that when $\delta$ is fixed,  in the above \eqref{e1} and \eqref{e1.5} its presence is not formally relevant.
However, it will be relevant in some cases where we will suppose that \eqref{e1} and \eqref{e1.5} hold uniformly as $\delta$ varies in a set having $0$ as an accumulation point.


In order to introduce later the differential of the self-consistent transfer operator, we now require that the function $g \to L_{\delta,g}(h)$ is differentiable in the \textit{Fr\'echet} sense at the fixed point $h$.

\begin{description}
    
\item [(f)](\textit{Differentiability of the family of operators at $h$}) Suppose there is a linear
bounded operator $$\partial L_{\delta ,h}(g ):V_w\rightarrow V_{s}$$
such that the following limit exists  \marginpar{}
\begin{equation}\label{e2}
\lim_{g\in V_w,\|      g \|_{w}\rightarrow 0}\frac{\|      L_{\delta ,h+g
}(h)-L_{\delta ,h}(h)-\partial L_{\delta ,h}(g )\|_{s}}{\|g \|_{w}}=0.
\end{equation}

\end{description}
\begin{remark}\label{normaL}
We remark that by assumption ${\bf (e)}$ we can estimate the norm of $\partial L_{\delta ,h}(g)$ as an operator from $V_w$ to $B_s$. Indeed
$$\|\partial L_{\delta ,h}(g)\|_{w\to s}=\lim_{\|g\|_w \to 0}\frac{\|\partial L_{\delta ,h}(g)\|_s}{\|g\|_w} $$
and
\begin{eqnarray}
\lim_{\|g\|_w \to 0}  \frac { \|\partial L_{\delta ,h}(g)\|_s}{\|g\|_w} &\leq & \lim_{\|g\|_w \to 0}\frac{\|      L_{\delta ,h+ g
}(h)-L_{\delta ,h}(h)\|_{s}}{\|g\|_w}
\\
&\leq &
 \delta C_0\|      h\|_{ss}.
\end{eqnarray}
  
\end{remark}


The following lemma provides sufficient conditions to define the differential of the operator $%
\mathcal{L}_{\delta }$.

\begin{lemma}[Fr\'echet differential of $\mathcal{L}_{\protect\delta }$]
\label{lemder} Suppose that  assumptions $\bf{(a)},\bf{(e)}$ and $\bf{(f)}$ are
satisfied. Then the differential operator $d\mathcal{L}_{\delta
,h}:V_s\rightarrow V_{s}$ of $\mathcal{L}_{\delta }$ computed at the
point $h,$\ defined by \marginpar{} 
\begin{equation}\label{defdiff}
\lim_{g \in V_s,\|g\|_{s}\rightarrow 0}\left\|\frac{\mathcal{L}_{\delta
}(h+g)-\mathcal{L}_{\delta }(h)-d\mathcal{L}_{\delta ,h}(g)}{\|g\|_{s}}\right\|_{w}=0
\end{equation}%
exists. Furthermore, we have that 
\begin{equation}
d\mathcal{L}_{\delta ,h}=L_{\delta ,h}+\partial L_{\delta ,h}  \label{e4}
\end{equation}
and $\ d\mathcal{L}_{\delta ,h}:$ $V_s\rightarrow V_{s}$ is a
bounded operator such that%
\begin{equation*}
\|d\mathcal{L}_{\delta ,h}\|_{s\rightarrow s}\leq \delta
C\|h\|_{ss}+C
\end{equation*}
where $C$ is the constant in \eqref{e0}.
\end{lemma}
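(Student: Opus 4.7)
The natural approach is to apply a standard ``add and subtract'' decomposition, splitting the variation of $\mathcal{L}_\delta$ into a linear part coming from the action of the operator $L_{\delta,h}$ on the perturbation and a part coming from the variation of the family $L_{\delta,f}$ itself at the fixed point $h$.

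Concretely, for $g \in V_s$ with $\|g\|_s$ small, write
\begin{equation*}
\mathcal{L}_{\delta}(h+g) - \mathcal{L}_{\delta}(h) = L_{\delta,h+g}(h+g) - L_{\delta,h}(h) = L_{\delta,h+g}(g) + \bigl[L_{\delta,h+g}(h) - L_{\delta,h}(h)\bigr].
\end{equation*}
Subtracting the candidate differential $L_{\delta,h}(g) + \partial L_{\delta,h}(g)$ gives the remainder
\begin{equation*}
R(g) = \bigl[L_{\delta,h+g}(g) - L_{\delta,h}(g)\bigr] + \bigl[L_{\delta,h+g}(h) - L_{\delta,h}(h) - \partial L_{\delta,h}(g)\bigr].
\end{equation*}
The plan is to show $\|R(g)\|_w = o(\|g\|_s)$ as $\|g\|_s \to 0$, which is exactly \eqref{defdiff}. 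For the first bracket, assumption $\bf{(e)}$ (inequality \eqref{e1.5}) yields $\|L_{\delta,h+g}(g) - L_{\delta,h}(g)\|_w \leq \delta C_1 \|g\|_s \|g\|_w$, and since $\|g\|_w \leq \|g\|_s \to 0$, this is $o(\|g\|_s)$. For the second bracket, I would use $\|\cdot\|_w \leq \|\cdot\|_s$ to dominate by the $\|\cdot\|_s$-norm, then apply $\bf{(f)}$ (equation \eqref{e2}); the factor $\|g\|_w/\|g\|_s \leq 1$ keeps the quotient bounded while the defining limit of $\partial L_{\delta,h}$ drives it to zero, once one checks that $\|g\|_s \to 0$ forces $\|g\|_w \to 0$ (which is immediate from the norm inequality).

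To confirm that $d\mathcal{L}_{\delta,h}$ really maps $V_s$ into $V_s$, I would argue separately for the two summands in \eqref{e4}: $L_{\delta,h}$ is a Markov operator, hence preserves total mass, so it sends $V_s$ to $V_s$ by the preservation of $B_s$ stated before assumption $\bf{(a)}$; the operator $\partial L_{\delta,h}$ is defined in $\bf{(f)}$ as a map from $V_w$ to $V_s$, so its restriction to $V_s \subset V_w$ takes values in $V_s$.

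Finally, for the norm bound I would combine $\|L_{\delta,h}\|_{s\to s} \leq C$ from $\bf{(b)}$ with the bound $\|\partial L_{\delta,h}\|_{w\to s} \leq \delta C_0\|h\|_{ss}$ from Remark \ref{normaL}; composing the latter with the trivial embedding $\|\cdot\|_w \leq \|\cdot\|_s$ gives $\|\partial L_{\delta,h}\|_{s \to s} \leq \delta C_0\|h\|_{ss}$, and the triangle inequality delivers the stated bound (absorbing $C_0$ into the constant $C$). The main subtlety I expect is purely notational rather than conceptual: the differentiability in $\bf{(f)}$ is expressed with the weak norm in the denominator while the conclusion \eqref{defdiff} uses the strong norm in the denominator, so one has to track carefully that the norm inequality $\|\cdot\|_w \leq \|\cdot\|_s$ makes the $o$-terms compatible, and that assumption $\bf{(e)}$ supplies precisely the quadratic-in-norm estimate needed to absorb the cross term $L_{\delta,h+g}(g) - L_{\delta,h}(g)$.
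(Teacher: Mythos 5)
Your proposal is correct and follows essentially the same route as the paper: the same decomposition $\mathcal{L}_{\delta}(h+g)-\mathcal{L}_{\delta}(h)=L_{\delta,h+g}(g)+[L_{\delta,h+g}-L_{\delta,h}](h)$, with the cross term controlled by \eqref{e1.5} and the other term by assumption $\bf{(f)}$ via $\|g\|_w\le\|g\|_s$, and the norm bound obtained from $\bf{(b)}$ together with Remark \ref{normaL}. No gaps; your explicit handling of the weak-versus-strong denominators is exactly the point the paper treats implicitly.
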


\begin{proof}
Let $g\in V_{s},$ we can write%
\begin{equation*}\label{11}
\begin{split}
\mathcal{L}_{\delta }(h+g) &=L_{\delta,h+g}(h+g)  \\
&=L_{\delta ,h+g}(h)+L_{\delta,h+g}(g) \\
&=L_{\delta ,h}(h)+[L_{\delta,h+g}-L_{\delta ,h}](h)+L_{\delta ,h+g}(g).  
\end{split}
\end{equation*}
Hence%
\begin{equation*}
\begin{split}
&\frac{\|\mathcal{L}_{\delta }(h+g)-\mathcal{L}_{\delta }(h)-[L_{\delta
,h}(g)+\partial L_{\delta ,h}(g)]\|_{w}}{\|g\|_{s}} \\
&\le\frac{\|[L_{\delta ,(h+g)}-L_{\delta ,h}](h)-\partial L_{\delta
,h}(g)\|_{w}}{\|g\|_{s}} +\frac{\|L_{\delta ,(h+g)}(g)-L_{\delta ,h}(g)\|_{w}}{\|g\|_{s}}.
\end{split}
\end{equation*}
By assumption $\bf{(f)}$ $$\left\|\frac{1}{\|g\|_{s}}\left([L_{\delta ,(h+g)}-L_{\delta
,h}](h)-\partial L_{\delta ,h}(g)\right)\right\|_{w}\rightarrow 0$$ as $\|g\|_s \to 0$.  By $(\ref{e1.5})$, $%
L_{\delta ,h+g}$ $\rightarrow L_{\delta ,h}$ (in the stong-weak mixed norm)\ leading to \eqref{defdiff} and 
\eqref{e4}.\\
Finally, by Remark \ref{normaL} and assumption $\bf{(a)}$ we get the remaining part of the
statement.
\end{proof}

\begin{remark}\label{rmk:key}
It is important to remark that, while the operator $d\mathcal{L}_{\delta
,h}$ acts from the strong space $V_s$ to itself 
the convergence in \eqref{defdiff} is only in the weak topology. In particular, by the above proof, the fact that $h\in B_{ss}$ guarantees the convergence of $\frac{[L_{\delta ,(h+g)}-L_{\delta
,h}]}{\|g\|_{s}}(h)$ in $B_{s}$; however $\frac{L_{\delta ,(h+g)}(g)-L_{\delta ,h}(g)}{%
\|g\|_{s}}$ converges to $0$  
 only in the weak norm. This prevents us to use a direct iterative argument for the operators, needed to imply the main result. We overcome this complication later in the proof of Proposition \ref{fond}.
\end{remark}

\subsection{Local contraction of the self-consistent operator near a fixed point}
In this section we present our main result about the local stability and local exponential convergence to equilibrium for self-consistent transfer operators in a neighborhood of a fixed point $h$. 


\begin{definition}\label{defn:exp-dec}
(\textit{Local Strong Contraction}) We say that a self-consistent transfer operator has \textit{local strong contraction} (LOSC) to $h$ if, for some $ \epsilon>0$, there exist  $ \gamma , C> 0$,
such that, for each $\mu \in P_s$ such that $\|h-\mu \|_{s}\leq \epsilon$ and $n\in \mathbb{N}$
\begin{equation}\label{eq:WLED}
\|h-\mathcal{L}_{\delta }^{n}(\mu) \|_{s}\leq  Ce^{- \gamma n}\|h-\mu \|_{s}.
\end{equation}
\end{definition}
Clearly the local contraction implies a kind of local exponential convergence to equilibrium for the self-consistent operator:
 there exist  $\epsilon,\gamma ,C> 0$
such that, or each $\mu \in P_s$ such that $\|h-\mu \|_{s}\leq \epsilon$ and $n\in \mathbb{N}$
\begin{equation}\label{eq:WLEC}
\|h-\mathcal{L}_{\delta }^{n}(\mu) \|_{w}\leq Ce^{-\gamma n}\|h-\mu \|_{s}.
\end{equation}
The next proposition is our main  result, and it is a criteria to establish local strong contraction for the STO at a fixed point $h$.  It turns out that the STO is a local strong contraction if some iterate of the differential $d\mathcal{L}_{\delta
,h}$ is a contraction with respect to  the strong topology. 
Remarkably, this result holds independently of the strength of the coupling.

\begin{proposition} \label{fond} 
Under assumptions ${\bf (a)},{\bf (b)},{\bf (c)},{\bf (d)}, {\bf (e)}$ and ${\bf (f)}$, assume furthermore that
\begin{enumerate}
    
\item $\partial L_{\delta ,h}:V_w \to B_{ss}$ is bounded;
\footnote{Roughly speaking, in this assumption it is asked that the fixed point $h$ is even ``smoother" than the typical element in $B_{ss}$. It is written in terms of the differential in order to avoid to introduce a fourth space of measures.}
\item there are $n \in \mathbb{N}$ and a positive real $\lambda<1$ such that $\|d\mathcal{L}_{\delta
,h}^{n}(g)\|_{s}\leq \lambda \|g\|_{s}$ $\forall g \in V_s$.

\end{enumerate}
Then,  the system has local strong contraction to $h$.
\end{proposition}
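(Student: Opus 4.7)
The plan is to combine the iterated linearization of Lemma \ref{lemder} with the sequential Lasota-Yorke structure \textbf{(c)} and the convergence-to-equilibrium assumption \textbf{(d)} in order to upgrade the weak-norm contraction provided by hypothesis (2) to a strong-norm exponential decay. Setting $\psi_k := \mathcal{L}_\delta^k(h+g) - h \in V_s$, the target is $\|\psi_k\|_s \le Ce^{-\gamma k}\|g\|_s$ for $\|g\|_s$ small. A first reduction, based on applying sequential LY to the factorization $\mathcal{L}_\delta^k(h+g) = L_{\delta,\psi_{k-1}+h}\circ\cdots\circ L_{\delta,h+g}(h+g)$ together with the fixed-point condition \textbf{(a)}, yields a uniform bound $\|\psi_k\|_s \le M\|g\|_s$ for all $k$ whenever $\|g\|_s$ is small enough; the orbit of $h+g$ thus stays in a compact set of $B_s$.

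Next I would extend Lemma \ref{lemder} to iterates: by induction on $m$ using \textbf{(a)}, \textbf{(b)}, \textbf{(e)}, \textbf{(f)} and the $B_s$-boundedness of $d\mathcal{L}_{\delta,h}^{m-1}g$, one obtains (essentially the Proposition commented out above)
\[
\mathcal{L}_\delta^m(h+g) = h + d\mathcal{L}_{\delta,h}^m(g) + R_m(g), \qquad \|R_m(g)\|_w = o(\|g\|_s) \text{ as } \|g\|_s \to 0.
\]
For $m = n$ from hypothesis (2), the contraction $\|d\mathcal{L}_{\delta,h}^n\|_{s\to s}\le \lambda$ gives
\[
\|\psi_n\|_w \le \|d\mathcal{L}_{\delta,h}^n g\|_w + \|R_n(g)\|_w \le \lambda\|g\|_s + o(\|g\|_s) \le \lambda'\|g\|_s,
\]
with $\lambda' < 1$ for $\|g\|_s$ sufficiently small.

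The hard part, per Remark \ref{rmk:key}, is converting this weak-norm smallness into strong-norm smallness. I would set up a coupled recurrence for $\alpha_k := \|\psi_{kn}\|_s$ and $\beta_k := \|\psi_{kn}\|_w$: the weak inequality $\beta_{k+1} \le \lambda\alpha_k + o(\alpha_k)$ is immediate from iterated linearization applied at $\psi_{kn}$; the strong inequality would come from the telescoping identity
\[
\psi_{(k+1)n} = T_n(\psi_{kn}) + (T_n - L_{\delta,h}^n)(h), \qquad T_n := L_{\delta,\psi_{(k+1)n-1}+h}\circ\cdots\circ L_{\delta,\psi_{kn}+h},
\]
where sequential LY yields $\|T_n(\psi_{kn})\|_s \le A\lambda_{LY}^n\alpha_k + B\beta_k$, and each summand in the expansion of $(T_n - L_{\delta,h}^n)(h)$ is controlled via \eqref{e1}--\eqref{e1.5} together with \textbf{(d)}, using that $\psi_{kn+j}$ is close in weak norm to $L_{\delta,h}^j\psi_{kn}$, whose weak norm decays in $j$ by the convergence to equilibrium. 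Hypothesis (1) enters crucially in the induction for the extended linearization: the smoothing $\partial L_{\delta,h}: V_w \to B_{ss}$ keeps weak-norm errors regularized in strong norm after they are fed through $d\mathcal{L}_{\delta,h}$. The resulting $2\times 2$ linear recursion for $(\alpha_k,\beta_k)$ has spectral radius less than $1$ once $n$ is replaced by a sufficiently large iterate $Nn$ (using hypothesis (2) to make the effective contraction rate $\lambda^N$ small enough to dominate the Banach-constant contribution of sequential LY), which yields the geometric decay of $\alpha_k$ and hence the LOSC property at $h$.
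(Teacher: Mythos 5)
Your scheme diverges from the paper's at the decisive point, and it is exactly there that it has a genuine gap: the strong-norm half of your $(\alpha_k,\beta_k)$ recursion does not close outside the weak coupling regime. In the telescoping of $(T_n-L_{\delta,h}^n)(h)$ each summand is an outer sequential composition applied to $[L_{\delta,h+\psi_{kn+j}}-L_{\delta,h}](h)$, which \eqref{e1} only bounds by $\delta C_0\|h\|_{ss}\,\|\psi_{kn+j}\|_w$. Your control of these intermediate weak norms rests on the claim that $\psi_{kn+j}$ is weak-close to $L_{\delta,h}^j\psi_{kn}$, but the deviation contains the \emph{same} first-order term $(T_j-L^j_{\delta,h})(h)$ with coefficient proportional to $\delta\|h\|_{ss}$ (it is not $o(\alpha_k)$): the nonlinear orbit is not shadowed by $L_{\delta,h}$ alone unless the coupling is weak. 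Honest bookkeeping gives $w_j\lesssim a_j\alpha_k+\delta C\|h\|_{ss}\sum_{i<j}w_i$, so the constants in $\sum_{j<m}w_j$ grow (at least geometrically) with the block length $m$, and the diagonal entry of your nonnegative $2\times 2$ matrix is a fixed constant of order $\delta\|h\|_{ss}$ times a quantity that \emph{increases} with $m$; if that entry is $\geq 1$ the spectral radius is $\geq 1$ no matter how small the weak-step coefficient $\lambda^N$ becomes, so replacing $n$ by $Nn$ does not help — the gain $\lambda^N$ is eaten by the inflation of the constants. What your argument proves is therefore a weak-coupling statement (essentially the corollary of Lemma \ref{lemder3}), not the proposition. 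Two secondary issues: the preliminary uniform bound $\|\psi_k\|_s\leq M\|g\|_s$ does not follow from sequential LY as stated (the naive estimate gives $M^k\|g\|_s$), and bounded in $B_s$ is not compact (nor is compactness needed).

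The paper's proof avoids this loss by a different decomposition, and this is where hypothesis (1) — which your sketch invokes only vaguely and never actually uses — is essential. One proves inductively that $\mathcal{L}_\delta^m(h+g)-h=d\mathcal{L}_{\delta,h}^m(g)+L_g^{(m)}(g)-L_{\delta,h}^m(g)+o_{s\to s}(g)$ with a remainder that is small in the \emph{strong} norm: since $\partial L_{\delta,h}:V_w\to B_{ss}$ is bounded, $d\mathcal{L}_{\delta,h}^m(g)-L_{\delta,h}^m(g)$ lies in $B_{ss}$ with $\|\cdot\|_{ss}\lesssim\|g\|_w$, so \eqref{e1}-type estimates upgrade the weak-norm errors produced at each step to strong-norm errors. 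Then, instead of comparing $L_g^{(m)}$ with $L_{\delta,h}^m$, one bounds $\|L_g^{(m)}(g)\|_s$ and $\|L_{\delta,h}^m(g)\|_s$ \emph{separately} by the exponential loss of memory for sequential compositions on the zero-average space $V_s$ (Lemma \ref{losmem}, using \eqref{e00}, assumption {\bf (d)}, and the $O(\epsilon)$ closeness of the orbit operators to $L_{\delta,h}$), which requires no smallness of $\delta\|h\|_{ss}$; together with $\|d\mathcal{L}_{\delta,h}^m\|_{s\to s}$ small from hypothesis (2) this gives the contraction \eqref{go1}. These two ingredients — the strong-norm remainder via hypothesis (1), and the loss-of-memory lemma on $V_s$ in place of shadowing by $L_{\delta,h}$ — are precisely what your proposal is missing.
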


\begin{proof}
We prove that under the above assumptions there exist $\epsilon>0$, $m \in \mathbb{N}$ and $\bar\lambda<1$ such that for each $g\in V_s$ with $\|g\|_s\leq \epsilon$
\begin{equation}\label{go1}
\|\mathcal{L}_{\delta }^{m}(h+g)-\mathcal{L}_{\delta }^{m}h\|_{s}\leq
\bar\lambda \|g\|_{s}. 
\end{equation}
The local  strong contraction directly follows from  \eqref{go1}. 
Since we cannot just iterate the differential (see Remark \ref{rmk:key}), in order to
prove $(\ref{go1})$ we set up an inductive reasoning on the number of
iterates. We suppose that at a fixed time $n$ we have for each $g\in  V_s$, when $\|g\|_s\to 0$
\begin{equation}\label{sop2}
\mathcal{L}_{\delta }^{n}(h+g)-\mathcal{L}^n_{\delta }(h)=d\mathcal{L}^n
_{\delta ,h}(g)+L_{g}^{(n)}(g)-L_{\delta ,h}^{n}(g)+o_{s\to s}(g)
\end{equation}%
where $\dfrac{\|o_{s\to s}(g)\|_s}{\|g\|_{s}}\to  0$ \  uniformly on $g$ as $%
\|g\|_{s}\rightarrow 0$ and $L_{g}^{(n)}$ is a sequential composition of
operators $L_{\delta ,f_{1}}\circ ...\circ L_{\delta ,f_{n}}$, where $%
f_{1},...,f_{n}$ depend on $\mathcal{L}_{\delta }^{1}(h+g),...,\mathcal{L}_{\delta }^{n}(h+g)$, such that 
\begin{equation}\label{eq:cruc}
\frac{\|L_{g}^{(n)}(g)-L_{%
\delta ,h}^{n}(g)\|_{w}}{\|g\|_{s}} \rightarrow 0 \qquad \text{as} \qquad \|g\|_{s}\rightarrow 0.
\end{equation}%
We shall prove that the same structure persists in the next iterate.\\
Note that, for ${g}\in V_s$, 
$$\mathcal{L}_{\delta }(h+{g})-\mathcal{L}_{\delta }(h)\in  V_s $$
and by the definition of $\partial L_{\delta,h}$ and Lemma \ref{lemder} we get 
\begin{equation}\label{eq:oneiteratediff}
\mathcal{L}_{\delta }(h+{g})-\mathcal{L}_{\delta }(h)=\partial
L_{\delta ,h}({g})+L_{\delta ,h+{g}}({g})+o_{s}({g}).
\end{equation}%
(which gives the base step of the induction).
Moreover, setting
\[
 g_n:= 
\mathcal{L}^n_{\delta }(h+{g})-\mathcal{L}^n_{\delta }(h) \in V_s\] we have by the inductive assumption that
\[ g_n= d\mathcal{L}_{h}( g)^{n}+L_{ g}^{(n)}( g)-L_{\delta
,h}^{n}( g)+o_{s\to s}(  g).
\]
 Recalling  that $\mathcal{L}^n_\delta h=h$ we can write (where in the last step we apply \eqref{eq:oneiteratediff})

\begin{equation}\label{eq:key}
\begin{split}
\mathcal{L}_{\delta }^{n+1}(h+g)-\mathcal{L}_{\delta }^{n+1}(h) &=\mathcal{L}%
_{\delta }(\mathcal{L}_{\delta }^{n}(h+g))-h \\
&=\mathcal{L}_{\delta }(h+g_n)-h \\
&=\partial L_{\delta ,h}( g_n) +L_{\delta ,h+ g_n}( g_n)+o_{s\to s}(g_n).
\end{split}
\end{equation}
We remark that by the inductive hypotesys, the strong continuity of $d\mathcal{L}_{h} $ and the standing assumption $(a)$ we have that $\|g_n\|_s=O(\|g\|_s)$, thus the term $o_{s}(g_n)$ is also of the type  $o_{s\to s}(g)$. 
We can then write 
\[\mathcal{L}_{\delta }^{n+1}(h+g)-\mathcal{L}_{\delta }^{n+1}(h)=\partial L_{\delta ,h}( g_n) +L_{\delta ,h+ g_n}( g_n)+o_{s\to s}(g).\]
Let us study the first term in the sum on the right hand side of the  equation above:
\begin{equation*}
\begin{split}
\partial L_{\delta ,h}( g_n)&=\partial L_{\delta ,h}(d\mathcal{L}_{\delta
,h}(g)^{n}+L_{g}^{(n)}(g)-L_{\delta ,h}^{n}(g)+o_{s\to s}(g)) \\
&=\partial L_{\delta ,h}(d\mathcal{L}_{\delta ,h}(g)^{n}) +\partial L_{\delta ,h}(L_{g}^{(n)}(g) -L_{\delta ,h}^{n}(g)+o_{s\to s}(g)).
\end{split}
\end{equation*}
Supposing that $g$ is nonzero, we write
\[
\partial L_{\delta,h}(L_{g}^{(n)}(g)-L_{\delta ,h}^{n}(g))=
\|g\|_{s}\partial L_{\delta,h}\bigg(L_{g}^{(n)}\bigg(\frac{g}{\|g\|_{s}}\bigg)-L_{\delta ,h}^{n}\bigg(\frac{g}{\|g\|_{s}}%
\bigg)\bigg),
\]
and by \eqref{eq:cruc} and \eqref{e2} (see Remark \ref{normaL}) 
\[
\frac{\|\partial L_{\delta,h}(L_{g}^{(n)}(g)-L_{\delta ,h}^{n}(g))\|_s}{\|g\|_s}\to 0, \qquad \text{as}\qquad \|g\|_s\to 0.
\]
Therefore, we have found
\begin{equation}\label{eq:J1}
\partial L_{\delta ,h}( g_n)= \partial L_{\delta ,h}(d\mathcal{L}_{\delta ,h}(g)^{n})+o_{s\to s}(g).
\end{equation}
The second summand in \eqref{eq:key} is estimated arguing similarly and using crucially that 
$\partial L_{\delta ,h}:V_w \to B_{ss}$ is bounded,  which implies $d\mathcal{L}_{\delta ,h}(g)^{n}-L_{\delta ,h}^{n}(g)\in
B_{ss}$  and there is some $\overline C\geq0$ such that $\|d\mathcal{L}_{\delta ,h}(g)^{n}-L_{\delta ,h}^{n}(g)\|_{ss}\leq \overline C \|g\|_w$ . 
\footnote{\label{n6} Indeed by \eqref{e4} $d\mathcal{L}_{\delta ,h}(g)=L_{\delta ,h}(g)+\partial L_{\delta
,h}(g)$ and  $d\mathcal{L}_{\delta ,h}^{{n}}(g)-L_{\delta ,h}^{{n}}(g)=\sum_{i=1}^{{n}^{2}-1}Q_{i}(g)$ where $Q_{i}$ are
operators containing all the possible $n-$factors product of the operators $L_{\delta ,h}$ and $\partial L_{\delta ,h }$ except $L_{\delta ,h}^{{n}}(g)$. Due to the regularization effect of $\partial L_{\delta ,h }$ and to the assumption $\bf{(a)}$ each one of the operators  $Q_{i}$ are bounded when seen as operators $V_w\to B_{ss}.$}

Hence,  by \eqref{e1}
\[
L_{\delta ,h+ g_n}(d\mathcal{L}_{\delta ,h}(g)^{n}-L_{\delta
,h}^{n}(g)+o_{s\to s}(g))=L_{\delta ,h}(d\mathcal{L}_{\delta ,h}(g)^{n}-L_{\delta
,h}^{n}(g)) +o_{s\to s}(g).
\]
It follows that%
\begin{equation}\label{eq:J2}
\begin{split}
&L_{\delta ,h+ g_n}( g_n)=\\
&=L_{\delta ,h+ g_n}(d\mathcal{L}_{\delta ,h}(g)^{n}+L_{g}^{(n)}(g)-L_{\delta ,h}^{n}(g)+o_{s\to s}(g)) \\
&=L_{\delta ,h+ g_n}(d\mathcal{L}_{\delta ,h}(g)^{n}-L_{\delta
,h}^{n}(g)+o_{s\to s}(g)) +L_{\delta ,h+ g_n}(L_{g}^{(n)}(g)) \\
&=L_{\delta ,h}(d\mathcal{L}_{\delta ,h}(g)^{n}-L_{\delta
,h}^{n}(g))+o_{s\to s}(g)) +L_{\delta ,h+ g_n}(L_{g}^{(n)}(g)).
\end{split}
\end{equation}
Setting
\begin{equation}\label{defLn}
L_{g}^{(n+1)}(g):=L_{\delta ,h+ g_n}(L_{g}^{(n)}(g))  
\end{equation}%
and using \eqref{eq:J1} and \eqref{eq:J2} into \eqref{eq:key}, we get%
\begin{equation*}
\begin{split}
&\mathcal{L}_{\delta }^{n+1}(h+g)-\mathcal{L}_{\delta }^{n+1}h \\
&=\partial L_{\delta ,h}(d\mathcal{L}_{\delta ,h}(g)^{n})+o_{s\to s}(g)\\
&+ L_{\delta ,h}(d\mathcal{L}_{\delta ,h}(g)^{n}-L_{\delta
,h}^{n}(g))+o_{s\to s}(g)) +L_{\delta ,h+ g_n}(L_{g}^{(n)}(g))\\
&=d\mathcal{L}_{\delta ,h}^{n+1}(g)-L_{\delta
,h}^{n+1}(g)+L_{g}^{(n+1)}(g)+o_{s\to s}(g).  
\end{split}
\end{equation*}
We thus proved \eqref{sop2}, by induction on $n\ge 0$.\\
We remark that the sequential composition $L_g^n$ satisfies \eqref{e00}.
We are now ready to prove \eqref{go1}. We will use Lemma \ref{losmem} in the appendix applied to the sequential composition of operators 
$L_i\circ ...\circ L_1 =L_g^{i}$
and $L_0=L_{\delta,h}$.
We remark that $(ML1)$ is given by \eqref{e00} and $(ML3)$ is given by \eqref{eq:Vs}. To satisfy $(ML2)$ we will apply this result uniformly  for   $g$ ranging in a small  
 $\epsilon$ neighborhood   of $h$ (see Note \ref{notaunif}).
Recalling that once fixed $m \in \mathbb{N}$  we have $%
\|(L_{\delta ,h}-L_{\delta,g_{j}})\|_{B_{s}\rightarrow B_{w}}= O( \epsilon )$ for each $j \le m$,
 it follows by Lemma \ref{losmem} that we can choose $\epsilon$ so small and $m$ large enough  in such a way that
\begin{equation*}
\|L^m_g\|_{s\to s}\leq \frac{1}{5}
\end{equation*} and
\begin{equation*}
\|L^m_{\delta,h}\|_{s\to s}\leq \frac{1}{5}.
\end{equation*}
Moreover, by the assumption on the differential, we can also suppose that
\begin{equation*}
\|d\mathcal{L}_{\delta ,h}^{m}\|_{s\to s }\leq \frac{1}{5}.
\end{equation*}%

By $(\ref{sop2})$ we thus get that, for each $g\in V_s$ with $%
\|g\|_{s}\leq \epsilon $,%
\begin{eqnarray*}
\|\mathcal{L}_{\delta }^{m}(h+g)-\mathcal{L}_{\delta }^{m}h\|_{s} &=&\|(d%
\mathcal{L}_{\delta ,h}(g))^{m}+L_{g}^{m}(g)-L_{\delta
,h}^{m}(g)+o_{s\to s}(g)\|_{s} \\
&\leq &\frac{3}{5}\|g\|_{s}+o_{s\to s}(g).
\end{eqnarray*}
Finally, we can find $\epsilon_1\leq \epsilon $ such that  if $g\in V_s$ is such that  $\|g\|_{s}\leq \epsilon _{1}$, the term $o_{s\to s}(g)$ is such that $
\|o_{s\to s}(g)\|_s\leq \frac{1}{5}\|g\|_{s}$. We conclude that%
\begin{equation*}
\|\mathcal{L}_{\delta }^{m}(h+g)-\mathcal{L}_{\delta }^{m}h\|_{s}\leq \frac{4%
}{5}\|g\|_{s},
\end{equation*}%
for each $g\in  V_s $ such that $\|g\|_{s}\leq \epsilon _{1}$, proving the
statement.
\end{proof}

\subsection{Convergence to equilibrium in the weak coupling }\label{weakc}
One interesting consequence of the results of the previous section is a relatively simple criteria for exponential  convergence to equilibrium in the weak coupling.
In the weak-coupling case it turns out that if the spectral radius of $L_{\delta ,h}|_{TQ_{h,s}}$ is less than $1$, then the spectral radius of $d\mathcal{L}_{\delta ,h}$ is also less than $1$, as it is shown in the following lemma.

\begin{lemma}\label{lemder3}
Let us suppose that there is a $\delta_0$ such  that  the family $L_{\delta,f}$ and the associated self-consistent transfer operator $\mathcal{L}_{\delta}$ satisfy the assumptions $\bf{(a)},...,\bf{(e)}$ and the assumption $(1)$ of Proposition \ref{fond} for  each $0\leq\delta \leq \delta_0$ with a fixed point $h_\delta$ depending on $\delta$ such that $\|h_\delta\|_{ss}$ is uniformly bounded. Suppose furthermore
$\exists n\in \mathbb{N}$ such that for  each $0\leq\delta \leq \delta_0$
\[
\ \|L_{\delta,h_\delta}^{n}\|_{V_s\rightarrow V_s}<1,
\]
 then there exists $\delta_1\leq \delta_0 $ such that for each $0<\delta \leq \delta_1$

\[
\|d\mathcal{L}_{\delta,h_{\delta}}^{n}\|_{V_s\rightarrow V_s}<1.
\]
\end{lemma}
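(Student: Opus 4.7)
The plan is to exploit the decomposition $d\mathcal{L}_{\delta,h_\delta}=L_{\delta,h_\delta}+\partial L_{\delta,h_\delta}$ from Lemma \ref{lemder} and treat $\partial L_{\delta,h_\delta}$ as a perturbation of size $O(\delta)$. Expanding the $n$-fold composition into its $2^n$ non-commutative monomials I would write
\begin{equation*}
d\mathcal{L}_{\delta,h_\delta}^{\,n} \;=\; L_{\delta,h_\delta}^{\,n} \;+\; R_{\delta,n},
\end{equation*}
where $R_{\delta,n}$ collects the $2^n-1$ ordered products of $n$ letters taken from $\{L_{\delta,h_\delta},\partial L_{\delta,h_\delta}\}$ in which the letter $\partial L_{\delta,h_\delta}$ appears at least once (the same expansion used in Footnote \ref{n6}).

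Next I would estimate $R_{\delta,n}$ on $V_s$. Assumption ${\bf (b)}$ gives $\|L_{\delta,h_\delta}\|_{s\to s}\le C$. Setting $M:=\sup_{\delta\in[0,\delta_0]}\|h_\delta\|_{ss}<\infty$, Remark \ref{normaL} yields $\|\partial L_{\delta,h_\delta}\|_{V_w\to V_s}\le\delta C_0 M$, and since $\|\cdot\|_w\le\|\cdot\|_s$ the same bound is valid for $\partial L_{\delta,h_\delta}$ as an endomorphism of $V_s$. A monomial in $R_{\delta,n}$ with exactly $k\ge 1$ occurrences of $\partial L_{\delta,h_\delta}$ therefore has $V_s\to V_s$ operator norm at most $(\delta C_0 M)^k C^{n-k}$, so summing over all such products
\begin{equation*}
\|R_{\delta,n}\|_{V_s\to V_s}\;\le\;(C+\delta C_0 M)^n-C^n\;\le\;K\delta,
\end{equation*}
for a constant $K$ depending only on $n,C,C_0,M,\delta_0$. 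Consequently
\begin{equation*}
\|d\mathcal{L}_{\delta,h_\delta}^{\,n}\|_{V_s\to V_s}\;\le\;\|L_{\delta,h_\delta}^{\,n}\|_{V_s\to V_s}+K\delta.
\end{equation*}

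To finish, read the hypothesis as providing a uniform gap $\alpha:=\sup_{\delta\in[0,\delta_0]}\|L_{\delta,h_\delta}^{\,n}\|_{V_s\to V_s}<1$ and set $\delta_1:=\min\{\delta_0,(1-\alpha)/(2K)\}$; then for every $0<\delta\le\delta_1$ the right-hand side is at most $(1+\alpha)/2<1$, which is the claim. The main subtlety is exactly this uniformity step: at $\delta=0$, Remark \ref{normaL} forces $\partial L_{0,h_0}=0$, so $d\mathcal{L}_{0,h_0}=L_{0,h_0}$ and the bound is trivial, but to promote the pointwise inequality $\|L_{\delta,h_\delta}^n\|_{V_s\to V_s}<1$ to a uniform one on a neighbourhood of $0$ one needs an upper-semicontinuity argument for $\delta\mapsto\|L_{\delta,h_\delta}^{\,n}\|_{V_s\to V_s}$. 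Such an argument can be extracted from ${\bf (e)}$ (which controls the dependence of $L_{\delta,f}$ on $\delta$ in the weak operator topology) together with the uniform Lasota--Yorke estimate ${\bf (c)}$ and convergence to equilibrium ${\bf (d)}$, in the spirit of Lemma \ref{losmem}; this is the only non-mechanical part of the proof.
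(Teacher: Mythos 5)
Your proposal is correct and follows essentially the paper's own argument: both proofs are perturbative, establishing $\|d\mathcal{L}_{\delta,h_\delta}^{\,n}-L_{\delta,h_\delta}^{\,n}\|_{V_s\to V_s}=O(\delta)$ (the paper by a recursive estimate on $d_m:=\|L_{\delta,h_\delta}^m(g)-d\mathcal{L}_{\delta,h_\delta}^m(g)\|_s$ using $d_{m+1}\le C d_m+\delta C\|d\mathcal{L}_{\delta,h_\delta}^{m-1}(g)\|_w\|h_\delta\|_{ss}$, you by expanding $(L_{\delta,h_\delta}+\partial L_{\delta,h_\delta})^n$ into monomials and bounding $\|\partial L_{\delta,h_\delta}\|_{s\to s}\le\delta C_0 M$ via Remark \ref{normaL} — the same computation with different bookkeeping) and then conclude by comparison with the assumed bound on $\|L_{\delta,h_\delta}^n\|_{V_s\to V_s}$. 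Your explicit observation that the hypothesis must be read as a uniform gap $\sup_{\delta\in[0,\delta_0]}\|L_{\delta,h_\delta}^n\|_{V_s\to V_s}<1$ (or supplemented by a semicontinuity argument) addresses a point the paper's proof leaves implicit in the phrase ``take $\delta$ small enough'', so your write-up is, if anything, slightly more careful.
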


\begin{proof} 
Let $0\leq\delta \leq \delta_0 $ and $g\in  V_s$, let us denote $d_{m}:=\|L_{\delta ,h_{\delta}}^{m}(g)-d\mathcal{L}_{\delta ,h_{\delta}}^{m}(g)\|_{s}$
then, by Remark \ref{normaL}
 $$d_{1}\leq \delta C\|g\|_{w}\|h_{\delta}\|_{ss}.$$ 
 Now let us consider further iterates
\begin{eqnarray*}
L_{\delta ,h_{\delta}}^{m}(g)-d\mathcal{L}_{\delta ,h_{\delta}}^{m}(g) &=&L_{\delta
,h_{\delta}}^{m}(g)-(L_{\delta ,h_{\delta}}(d\mathcal{L}_{\delta ,h_{\delta}}^{m-1}(g))+\partial
L_{\delta ,h_{\delta}}(d\mathcal{L}_{\delta ,h_{\delta}}^{m-1}(g))) \\
&=&L_{\delta ,h_{\delta}}^{m}(g)-L_{\delta ,h_{\delta}}(d\mathcal{L}_{\delta
,h_{\delta}}^{m-1}(g))-\partial L_{\delta ,h_{\delta}}(d\mathcal{L}_{\delta ,h_{\delta}}^{m-1}(g)) \\
&=&L_{\delta ,h_{\delta}}(L_{\delta ,h_{\delta}}^{m-1}(g)-d\mathcal{L}_{\delta
,h_{\delta}}^{m-1}(g))-\partial L_{\delta ,h_{\delta}}(d\mathcal{L}_{\delta ,h_{\delta}}^{m-1}(g)).
\end{eqnarray*}

Thus  $d_{m+1}\leq 
Cd_{m}+\delta C\|d\mathcal{L}_{\delta ,h_{\delta}}^{m-1}(g)\|_{w}\|h_{\delta}\|_{ss}$.
Since $\|h_{\delta}\|_{ss}$ is uniformly bounded and
$d\mathcal{L}_{\delta ,h}(g)$  is bounded as an operator from the weak space to itself,  it suffices to take $\delta $ small enough and $d_{m}$ is small as
wanted, for any fixed $m$, implying $\|d\mathcal{L}_{\delta,f}^{n}\|_{{V_s}\rightarrow V_s}<1$.
\end{proof}

Applying Proposition \ref{fond} to each $\mathcal{\delta}$ with  $0\leq \delta \leq \delta_1$ one directly gets the following.
\begin{corollary} Under the assumptions of Lemma \ref{lemder3} we have that for each $0\leq\delta \leq \delta_1 $ the self-consistent transfer operator $\mathcal{L}_\delta$ has local strong contraction to  its fixed point $h_\delta.$
    
\end{corollary}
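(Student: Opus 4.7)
The proof is essentially a direct synthesis of Lemma \ref{lemder3} and Proposition \ref{fond}; my plan is to verify, for each parameter value in the interval $[0,\delta_1]$, that both hypotheses of Proposition \ref{fond} are satisfied, and then to read off the conclusion.

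First I would note that the standing assumptions $\bf{(a)},\ldots,\bf{(e)}$ together with condition $(1)$ of Proposition \ref{fond} are already assumed uniformly in $\delta$ by the hypotheses of Lemma \ref{lemder3}; assumption $\bf{(f)}$ is also in force since it is needed to even state Lemma \ref{lemder3} (the object $d\mathcal{L}_{\delta,h_\delta}$ appears in its conclusion). Hence the only thing to check, $\delta$ by $\delta$, is condition $(2)$ of Proposition \ref{fond}, namely the existence of $n\in\mathbb{N}$ and $\lambda<1$ with $\|d\mathcal{L}_{\delta,h_\delta}^n\|_{s\to s}\leq \lambda$ on $V_s$.

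For $0<\delta\leq \delta_1$, this is exactly the conclusion of Lemma \ref{lemder3}. The only value not directly covered by that lemma is $\delta=0$; here I would argue separately, observing that if one sets $\delta=0$ in the regularity estimate \eqref{e1}, then $L_{0,f}(h_0)=L_{0,g}(h_0)$ for all $f,g$, so the Fr\'echet differential $\partial L_{0,h_0}$ of the map $f\mapsto L_{0,f}(h_0)$ at $h_0$ vanishes. By the decomposition \eqref{e4} this gives $d\mathcal{L}_{0,h_0}=L_{0,h_0}$, and the uniform hypothesis $\|L_{\delta,h_\delta}^n\|_{V_s\to V_s}<1$ at $\delta=0$ already supplies the required contraction. (Alternatively, one may simply shrink $\delta_1$ so that the stated interval is taken as $0<\delta\leq\delta_1$ and cite Lemma \ref{lemder3} throughout.)

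With condition $(2)$ thus verified for every $\delta\in[0,\delta_1]$, an application of Proposition \ref{fond} at each such $\delta$ yields local strong contraction of $\mathcal{L}_\delta$ at $h_\delta$, which is the desired conclusion. There is no real obstacle in this argument since the technical work has already been carried out in the two preceding results; the one mild subtlety worth flagging is that the neighborhood size $\epsilon$ and the contraction rate produced by Proposition \ref{fond} are in principle $\delta$-dependent, so the corollary as stated asserts contraction for each individual $\delta$ rather than uniform contraction over $[0,\delta_1]$ — a uniform version would require revisiting the quantitative steps inside the proof of Proposition \ref{fond} and is not claimed here.
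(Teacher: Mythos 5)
Your proposal is correct and follows essentially the same route as the paper, whose entire proof is the one-line observation that Proposition \ref{fond} applies to each $\delta$ with $0\leq\delta\leq\delta_1$. Your extra care with the endpoint $\delta=0$ (where \eqref{e1} forces $\partial L_{0,h_0}=0$, so $d\mathcal{L}_{0,h_0}=L_{0,h_0}$) and with the implicit need for assumption $\bf{(f)}$ only makes explicit details the paper glosses over, and your remark that the contraction constants are $\delta$-dependent is an accurate reading of the statement.
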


\subsection{Lasota-Yorke inequality for the differential}\label{sec:LYdiff}
We conclude this section establishing some interesting facts about the spectral picture of the differential operator. 

Here, we prove that under the standing assumptions, the differential operator, regardless of the strength of the coupling, satisfies a Lasota-Yorke inequality. This implies quasi-compactness properties for the differential which can be very useful in order to get information on the stability of fixed points and of the spectral picture.

\begin{lemma}[Lasota-Yorke for the differential]\label{lem:LYdiff}
Under the assumptions of Proposition \ref{fond}, the operator
$d\mathcal{L}_{\delta ,h}^{n}:V_s\rightarrow V_{s}$ satisfies a Lasota-Yorke
inequality: there are $0\leq \tilde \lambda <1$ and $C_{4},C_{5} \geq 0$ such that for
each $n \in \mathbb{N}, g\in V_s$,%
\begin{equation*}
\|d\mathcal{L}_{\delta ,h}^{n}(g)\|_{s}\leq \tilde\lambda
^{n}C_4\|g\|_{s}+C_5\|g\|_{w}.
\end{equation*}
\end{lemma}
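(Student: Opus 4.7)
Decompose $d\mathcal{L}_{\delta,h} = L + P$ with $L := L_{\delta,h}$ and $P := \partial L_{\delta,h}$ (Lemma \ref{lemder}). Iterating $d\mathcal{L}^{k+1} = L\, d\mathcal{L}^k + P\, d\mathcal{L}^k$ produces the identity
\[
d\mathcal{L}^n = L^n + \sum_{k=1}^n L^{n-k}\, P\, d\mathcal{L}^{k-1}.
\]
The first term is handled immediately by assumption ${\bf (c)}$ applied with $f_i = h$: $\|L^n g\|_s \leq A\lambda^n\|g\|_s + B\|g\|_w$, which is already of Lasota--Yorke form. It therefore suffices to bound the sum of mixed terms, in $\|\cdot\|_s$, by a uniform constant times $\|g\|_w$ plus a geometrically decaying multiple of $\|g\|_s$.

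For each mixed summand, assumption $(1)$ of Proposition \ref{fond} gives $\|Pv\|_s \leq \|Pv\|_{ss} \leq K\|v\|_w$ for some $K\geq 0$. Combining this with ${\bf (c)}$ applied to $L^{n-k}$ on the input $P d\mathcal{L}^{k-1}g\in V_s$ (whose $s$- and $w$-norms are both bounded by $K\|d\mathcal{L}^{k-1}g\|_w$) yields
\[
\|L^{n-k}\, P\, d\mathcal{L}^{k-1}g\|_s \leq K(A\lambda^{n-k}+B)\, \|d\mathcal{L}^{k-1}g\|_w.
\]
The Lasota--Yorke inequality for $d\mathcal{L}^n$ then follows by summing a geometric series in $k$, provided that one can control $\|d\mathcal{L}^k g\|_w$ uniformly in $k$ by a bound of the form $C'(\tilde\rho^k\|g\|_s + \|g\|_w)$ for some $\tilde\rho < 1$; the term with $\tilde\rho^k \|g\|_s$ feeds the geometrically decaying coefficient of $\|g\|_s$ in the final bound, while the $\|g\|_w$ contribution is absorbed into $C_5$.

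This uniform weak-norm estimate is the principal technical obstacle. Running the same decomposition in $\|\cdot\|_w$ and using ${\bf (d)}$ (which gives $\|L^m v\|_w \leq a_m\|v\|_s$ for $v\in V_s$, with $a_m\to 0$), together with $Pd\mathcal{L}^{j-1}g\in V_s$, I would deduce the Volterra-type inequality
\[
\|d\mathcal{L}^k g\|_w \leq a_k\|g\|_s + K\sum_{j=1}^k a_{k-j}\, \|d\mathcal{L}^{j-1}g\|_w.
\]
Joint use of ${\bf (c)}$ and ${\bf (d)}$ forces $\|L^n\|_{V_s\to V_s}\leq A\lambda^n+Ba_n\to 0$, so the spectral radius of $L|_{V_s}$ is strictly less than $1$; quasi-compactness from ${\bf (c)}$ then upgrades this to a geometric decay $a_n\leq \bar C\rho^n$ with some $\rho<1$. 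A discrete Gronwall/renewal argument --- applied, if necessary, to a sufficiently high iterate $d\mathcal{L}^{n_0}$ so that the $\ell^1$-norm of the convolution kernel becomes strictly less than $1$ --- then yields the required uniform bound on $\|d\mathcal{L}^k g\|_w$. Substituting back and summing the series completes the proof, with $\tilde\lambda$ chosen in $(\max(\lambda,\rho),1)$ (to absorb polynomial pre-factors coming from the convolution $\sum_k \lambda^{n-k}\tilde\rho^{k-1}$) and constants $C_4, C_5$ independent of $n$.
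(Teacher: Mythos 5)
Your reduction of the problem to the uniform weak-norm estimate $\|d\mathcal{L}_{\delta,h}^{k}g\|_{w}\leq C'(\tilde\rho^{k}\|g\|_{s}+\|g\|_{w})$ is exactly where the argument breaks, and this is a genuine gap, not a technicality. Writing $L:=L_{\delta,h}$, $P:=\partial L_{\delta,h}$, $K:=\|P\|_{w\to ss}$, the convolution kernel in your Volterra inequality has, at lag zero (the $j=k$ term), the coefficient $K a_{0}$ with $a_{0}\geq 1$, since no convergence-to-equilibrium factor $a_{m}$, $m\geq 1$, multiplies it; so the $\ell^{1}$-mass of the kernel is at least $K$, and by Remark \ref{normaL} $K$ is of order $\delta C_{0}\|h\|_{ss}$ --- precisely the quantity that is large in the strong-coupling regime the lemma is meant for. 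Passing to a block iterate $d\mathcal{L}_{\delta,h}^{n_{0}}$ does not shrink this: the block kernel is then controlled by $\|d\mathcal{L}_{\delta,h}^{n_{0}}-L^{n_{0}}\|_{w\to w}$, which grows with $n_{0}$, not by the decay of $a_{n}$. Indeed no uniform-in-$k$ bound on the iterates of $d\mathcal{L}_{\delta,h}$ can follow from ${\bf (a)}$--${\bf (f)}$ and hypothesis $(1)$ of Proposition \ref{fond} alone: when $\|P\|_{w\to ss}$ is large the operator $d\mathcal{L}_{\delta,h}=L+P$ need not be power-bounded on $V_{s}$ at all (compare Section \ref{ex2}, where $d\mathcal{L}_{\delta,\psi_{\tilde\phi}}=\dot\psi_{\tilde\phi}$ and $L_{\delta,\psi_{\tilde\phi}}$ vanishes on zero-average densities); uniform control of the iterates is essentially hypothesis $(2)$ of Proposition \ref{fond}, and the purpose of this lemma is to be available \emph{before} that hypothesis is verified, so invoking it would be circular. (A secondary bookkeeping issue: even granting your uniform weak bound, the $B$-part of \eqref{e00} applied to $L^{n-k}$ makes the $\|g\|_{s}$-coefficient of your sum contain a non-decaying constant of order $KBC'/(1-\tilde\rho)$ and the $\|g\|_{w}$-coefficient grow linearly in $n$; one would have to replace \eqref{e00} there by the geometric contraction of $L$ on $V_{s}$, which does follow from ${\bf (c)}$ and ${\bf (d)}$ as you note.)

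The paper sidesteps all of this by proving the estimate for a single fixed power: choose $\bar n$ with $A\lambda^{\bar n}<1$, expand $d\mathcal{L}_{\delta,h}^{\bar n}=(L+P)^{\bar n}=L^{\bar n}+\sum_{i}Q_{i}$ into the finitely many mixed monomials, and bound each $Q_{i}$ as an operator $V_{w}\to B_{s}$: the factors to the right of the distinguished occurrence of $P$ are bounded on the weak norm, $P$ upgrades weak to strong (Remark \ref{normaL}), and the remaining $L^{j_{i}}$ is bounded on $B_{s}$; hence $\|d\mathcal{L}_{\delta,h}^{\bar n}g\|_{s}\leq A\lambda^{\bar n}\|g\|_{s}+(B+\overline K)\|g\|_{w}$, with no need to control the iterates of $d\mathcal{L}_{\delta,h}$ uniformly. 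Your own computation already yields this once you stop insisting on all $n$ simultaneously: for the fixed power $\bar n$, the correction term $K\sum_{k=1}^{\bar n}(A\lambda^{\bar n-k}+B)\|d\mathcal{L}_{\delta,h}^{k-1}g\|_{w}$ is bounded by a constant depending only on $\bar n$ (using crudely $\|d\mathcal{L}_{\delta,h}\|_{V_{w}\to V_{w}}\leq C+K$) times $\|g\|_{w}$, which is exactly the paper's inequality \eqref{al}. So the repair is to drop the renewal step and prove the Lasota--Yorke estimate for a fixed iterate, which is the content the paper's proof actually establishes and what is used in the applications.
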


\begin{proof}
By $(\ref{e00})$ \ $\|L_{\delta ,f}^{n}(g)\|_{s}\leq \lambda
^{n}A\|g\|_{s}+B\|g\|_{w}.$ Let $\overline{n}$ be such that $\lambda
_{2}:=\lambda ^{\overline{n}}A<1$.

We are going to prove that there is  $B_{3}\geq 0$ such that 
\begin{equation}
\|d\mathcal{L}_{\delta ,h}^{\overline{n}}(g)\|_{s}\leq \lambda
_{2}\|g\|_{s}+B_{3}\|g\|_{w}.  \label{al}
\end{equation}

By \eqref{e4} $d\mathcal{L}_{\delta ,h}(g)=L_{\delta ,h}(g)+\partial L_{\delta
,h}(g)$. Thus $d\mathcal{L}_{\delta ,h}^{\overline{n}}(g)=L_{\delta ,h}^{%
\overline{n}}(g)+\sum_{i=1}^{\overline{n}^{2}-1}Q_{i}(g)$ where $Q_{i}$ are
operators containing all the possible remaining $\bar n-$factors product of the
operators $L_{\delta ,h}$ and $\partial L_{\delta ,h }$. Let $j_{i}\geq 0$ such that $Q_{i}=L_{\delta ,h}^{j_{i}}\partial
L_{\delta ,h } \hat{Q}_{i}$ \ where $\hat{Q}_{i}$ is a $%
[n-j_{i}-1]-$factors product of the operators $L_{\delta ,h}$ and $\partial
L_{\delta ,h,}$. By $(\ref{e00})$ and $(\ref{e2})$ (see Remark \ref{normaL}) we
have that there is a $K_{i}\geq 0\ $, depending on $\bar n,~j_{i}\ $and $%
\|h\|_{ss} $ but not on $g$ such that $\|\hat{Q}_{i}(g)\|_{w}\leq
K_{i}\|g\|_{w}$ and then by Remark \ref{normaL}
\begin{eqnarray}
\|\partial L_{\delta ,h}\cdot \hat{Q}_{i}(g)\|_{s} &\leq &\delta
C\|h\|_{ss}\|\hat{Q}_{i}(g)\|_{w}  \label{23} \\
&\leq &\delta C\|h\|_{ss}K_{i}\|g\|_{w}  \notag
\end{eqnarray}%
hence, for each such $i$ there is $\overline{K}_i\geq 0$  such that
$$\|Q_i(g)\|_s\leq \overline{K}_i \|g\|_w.$$
Hence by setting $\overline K:=\sum_{i=1}^{\overline{n}^{2}-1} \overline{K}_i$ we get
$\|\sum_{i=1}^{\overline{n}^{2}-1}Q_{i}(g)\|_{s}\leq \overline K\|g\|_{w}$.
Thus 
\begin{eqnarray}
\|d\mathcal{L}_{\delta ,h}^{\overline{n}}(g)\|_s &\leq & \|L_{\delta ,h}^{
\overline{n}}(g)\|_s+\|\sum_{i=1}^{\overline{n}^{2}-1}Q_{i}(g)\|_s
\\ 
&\leq &  A\lambda^{\bar n}
\|g\|_{s}+B \|g\|_{w}+ \overline K\|g\|_{w}
\end{eqnarray}
and $(\ref{al})$ \ is proved.
\end{proof}

\section{Application to strongly coupled expanding maps }\label{sec:3} 

 In this section, we apply Proposition \ref{fond} to some expanding maps outside the weak coupling regime. We will provide a class of examples where, thanks to the Lasota-Yorke inequality proved in Section \ref{sec:LYdiff} we can have estimates on the spectrum of the differential, to deduce strong local contraction in a neighborhood of fixed points.
 To get a priori estimates on the spectrum of the differential, we will consider maps which are small perturbations of expanding maps having linear branches.

Let us recall some basic properties of expanding maps.
Let $T:\mathbb{T}\to \mathbb{T}$ be a smooth $C^4$ uniformly expanding map on the circle, that is a map for which there exists $\sigma >1$ such that, for each $x\in \mathbb{T}$,
\[
|T'(x)|>\sigma.
\]

It is known that the transfer operator $T_*$ of $T$ acting on any signed measure with density $f\in L^1(\mathbb{T})$ returns a measure with density
\[
T_* f(x)=\sum_{y\in T^{-1}(x)}\dfrac{f(y)}{|T'(y)|}.
\]
In the following we will apply our general theory to self-consistent transfer operators constructed starting from these kind of operators, acting on stronger or weaker Sobolev spaces. In the following the choice of strong and weak spaces will be $B_w=L^1,B_s=W^{1,1},B_{ss}=W^{2,1.}$

Let's define the class of self-consistent transfer operators that we are going to study. First we define the contribution of the coupling to the expanding dynamics $T$. This will be done by defining a certain diffeomorphism   $\Phi_{\mu}:\mathbb{T}\rightarrow \mathbb{T}$ depending on a measure $\mu $ representing the global state of the system. We define a mean field  coupling by composing with  $\Phi_{\mu}$   as done more precisely in the following definition.

\begin{definition} \label{Def:Self-ConsOp} Given an uncoupled  map $T:\mathbb{T}\rightarrow \mathbb{T}$, a pairwise interaction function $ H\in C^4(\mathbb{T}\times\mathbb{T}, \mathbb{R}) $, a coupling strength parameter $\delta\geq 0 $, and any $\mu$, a probability measure on $\mathbb{T}$, define
\begin{itemize}
\item $\Phi_{\mu}:\mathbb{T}\rightarrow \mathbb{T}$,
\[
\Phi_{\mu}(x)=x+\delta\int_{\mathbb{T}}H(x,y)d\mu(y)\mod 1
\]
which is the mean-field coupling map when a (finite or infinite) system of coupled maps is in the state $\mu$;
\item $\mathcal L_\delta: PM(\mathbb{T})\rightarrow PM(\mathbb{T})$, the STO for the coupled system whose global state is represented by $\mu$ and defined as
\[
\mathcal L_\delta(\mu):=T_*([\Phi_{\mu}]_*(\mu)),
\]
where we denote $L_{\delta,
\mu}:= T_*\circ [\Phi_\mu]_*$  the push-forwards of $T\circ\Phi_\mu$.
\end{itemize}

\end{definition}

The type of mean field coupled dynamics and self-consistent transfer operator defined above  has been widely studied in the context of expanding and piecewise expanding maps, Anosov maps and random maps (see \cite{BUMI} for an extensive survey). It describes the thermodynamic limit of finite dimensional coupled systems as it is explained by two different approaches in \cite{ST} and \cite{G}.

The lemma below, whose simple proof is left to the reader, will be useful in what follows. In the following we denote $\partial_1^k H$ the $k$-th derivative of $H$ with respect to the first variable.
\begin{lemma}\label{Lem:BOundsGphi}
Given $\Phi_{\mu}$ defined as above
\begin{align*}
|\Phi_{\mu}'|_\infty\le 1+|\delta| |\partial_1H|_\infty,\quad
|\Phi_{\mu}''|_\infty\le |\delta| |\partial_1^2H|_\infty,\quad
|\Phi_{\mu}'''|_\infty\le |\delta| |\partial_1^3H|_\infty.
\end{align*}
Furthermore, for $|\delta|<|\partial_1H|_\infty^{-1}$, $\Phi_{\mu}$ is a diffeomorphism and for any $\bar \delta\in(0,|\partial_1H|_\infty^{-1})$ there is a constant $K$ depending on $|\partial_1H|_\infty$ and $\bar\delta$,  such that
\[
|(\Phi_{\mu}^{-1})'|_\infty\le 1+K|\delta|
\]
for every $\delta$ with $|\delta|<\bar\delta$.
\end{lemma}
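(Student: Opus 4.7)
The plan is to differentiate the defining expression $\Phi_{\mu}(x) = x + \delta \int_{\mathbb{T}} H(x,y)\,d\mu(y)$ under the integral sign and then estimate the resulting integrals by the sup-norm of the corresponding derivative of $H$, using that $\mu$ is a probability measure. More precisely, since $H \in C^{4}(\mathbb{T}\times\mathbb{T},\mathbb{R})$, all partial derivatives $\partial_1^{k}H$ are continuous and uniformly bounded, so dominated convergence allows differentiation under the integral to give
\[
\Phi_{\mu}'(x) = 1 + \delta \int_{\mathbb{T}} \partial_1 H(x,y)\,d\mu(y), \qquad
\Phi_{\mu}^{(k)}(x) = \delta \int_{\mathbb{T}} \partial_1^{k}H(x,y)\,d\mu(y) \text{ for } k\ge 2.
\]
Taking absolute values and using $\mu(\mathbb{T}) = 1$ immediately yields the three stated sup-norm bounds.

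For the diffeomorphism claim, I would first note that $\Phi_\mu$ is well-defined as a map of $\mathbb{T}$ because $H$ is defined on the torus and is therefore $1$-periodic in the first variable, so any lift to $\mathbb{R}$ satisfies $\Phi_\mu(x+1) = \Phi_\mu(x)+1$, i.e.\ $\Phi_\mu$ has degree $1$. Under the hypothesis $|\delta|<|\partial_1 H|_\infty^{-1}$, the first bound gives
\[
\Phi_\mu'(x) \ge 1 - |\delta|\,|\partial_1 H|_\infty > 0
\]
uniformly in $x$, so the lift is a $C^{4}$ strictly increasing degree-$1$ map of $\mathbb{R}$, hence a $C^{4}$ orientation-preserving diffeomorphism of the circle.

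The inverse bound then follows from the standard formula $(\Phi_\mu^{-1})'(y) = 1/\Phi_\mu'(\Phi_\mu^{-1}(y))$, which combined with the lower bound above yields
\[
|(\Phi_\mu^{-1})'|_\infty \le \frac{1}{1 - |\delta|\,|\partial_1 H|_\infty}.
\]
For $|\delta|<\bar\delta<|\partial_1 H|_\infty^{-1}$, the denominator is bounded away from zero by $1 - \bar\delta|\partial_1 H|_\infty > 0$, so a first-order Taylor expansion of $t\mapsto 1/(1-t)$ on the compact interval $[0,\bar\delta|\partial_1 H|_\infty]$ produces a constant $K = K(|\partial_1 H|_\infty,\bar\delta)$ with $1/(1-|\delta|\,|\partial_1 H|_\infty)\le 1 + K|\delta|$, finishing the proof.

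The argument is genuinely routine — as the authors note, the proof is left to the reader — and the only mildly delicate points are verifying that differentiation under the integral is legitimate (which is immediate from $C^{4}$ regularity of $H$ and finiteness of $\mu$) and checking that $\Phi_\mu$ has degree $1$ on the torus, which relies on interpreting $H$ as a function on $\mathbb{T}\times\mathbb{T}$ rather than on $\mathbb{R}\times\mathbb{R}$.
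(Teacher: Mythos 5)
Your argument is correct, and since the paper explicitly leaves this "simple proof to the reader," there is no proof in the paper to diverge from; your route (differentiation under the integral sign with $\mu(\mathbb{T})=1$, the degree-one lift argument for the diffeomorphism claim, and $(\Phi_\mu^{-1})'=1/\Phi_\mu'\circ\Phi_\mu^{-1}$ with the bound $1/(1-|\delta|\,|\partial_1H|_\infty)\le 1+K|\delta|$ on $|\delta|<\bar\delta$) is evidently the intended one. All steps check out, including the two points you flag as delicate.
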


If $\mu$ has a density $\phi$ with respect to Lebesgue, in the following we denote by  $\Lambda_\phi:=[\Phi_\phi]_*$ the coupling operator. One can give also an explicit expression for  $\Lambda_\phi$, i.e.
\[
    \Lambda_\phi f(x)=\dfrac{f}{\Phi_{\phi}'}(\Phi^{-1}_{\phi}(x)),  \; f\in L^1(\mathbb{T}), \; x\in \mathbb{T}.
\]

Since we are interested into the coupling of expanding maps outside the weak coupling regime, the following lemma gives a criteria based on the properties of $H$ and $T$ to ensure that with a certain non weak coupling strength $\delta$ the composition of the map with the coupling diffeomorphism is still an expanding map. 

\begin{lemma}\label{Lem:SmallDeltaUE}
Assume that $H$ is not constant and let $\rho_-:=  \min_{\mathbb{T}^2}\partial_1 H<0$. For each $\delta\in (0,(\sigma^{-1}-1)/\rho_-)$, the composition $T_\phi:=T\circ \Phi_\phi:\mathbb T\rightarrow\mathbb T$ is a uniformly expanding map for every $\phi\in P_s$, 
and furthermore there are $\sigma'>1$ and $K>0$ such that
\[
\inf_{x\in \mathbb{T}}|T_\phi'(x)|\ge \sigma'>1,\quad \|T_\phi\|_{C^3}\le K\quad\quad\forall \phi\in P_s.
\] 
\end{lemma}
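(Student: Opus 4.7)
The plan is to establish both claims by direct chain-rule computations starting from the explicit integral form of $\Phi_\phi$, together with Lemma~\ref{Lem:BOundsGphi}.

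First I would differentiate $\Phi_\phi(x)=x+\delta\int_\mathbb{T}H(x,y)\,d\phi(y)$ under the integral to obtain $\Phi_\phi'(x)=1+\delta\int_\mathbb{T}\partial_1 H(x,y)\,d\phi(y)$, and use the pointwise lower bound $\partial_1 H\geq \rho_-$ to get the $\phi$-uniform estimate $\Phi_\phi'(x)\geq 1+\delta\rho_-$. The assumed range $\delta\in(0,(\sigma^{-1}-1)/\rho_-)$, after multiplying through by $\rho_-<0$, rearranges to the equivalent inequality $\sigma(1+\delta\rho_-)>1$; in particular $1+\delta\rho_-> \sigma^{-1}>0$, so $\Phi_\phi'>0$ everywhere and $\Phi_\phi$ is a smooth orientation-preserving circle map, making $T_\phi=T\circ\Phi_\phi$ a well-defined smooth self-map of $\mathbb{T}$. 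The uniform expansion then follows at once from the chain rule:
\[
|T_\phi'(x)|=|T'(\Phi_\phi(x))|\,\Phi_\phi'(x)\geq \sigma(1+\delta\rho_-)=:\sigma'>1,
\]
where $\sigma'$ depends only on $\sigma,\delta,\rho_-$ and hence is independent of $x\in\mathbb{T}$ and $\phi\in P_s$.

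For the uniform $C^3$ bound I would apply Faà di Bruno to $T_\phi=T\circ\Phi_\phi$ and write each derivative $T_\phi^{(k)}$, $k\leq 3$, as a polynomial expression in $T^{(j)}(\Phi_\phi(\cdot))$ and $\Phi_\phi^{(j)}$ for $j\leq k$. Since $T\in C^4(\mathbb{T})$, all factors $T^{(j)}\circ\Phi_\phi$ are bounded uniformly by $\|T\|_{C^3}$; and Lemma~\ref{Lem:BOundsGphi} provides sup-norm bounds on $\Phi_\phi',\Phi_\phi'',\Phi_\phi'''$ depending only on $\delta$ and the derivatives of $H$, hence uniform in $\phi\in P_s$. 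Combining these yields a constant $K=K(T,H,\delta)$ with $\|T_\phi\|_{C^3}\leq K$ for every $\phi\in P_s$. There is no substantial obstacle here: the whole argument reduces to the chain rule plus the a priori derivative bounds of Lemma~\ref{Lem:BOundsGphi}, and the only step requiring care is the elementary rearrangement identifying the $\delta$-range with the expansion condition $\sigma(1+\delta\rho_-)>1$ that produces the uniform constant $\sigma'$.
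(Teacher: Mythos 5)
Your proof is correct and follows essentially the same route as the paper: the chain rule with the pointwise bound $\int\partial_1 H\,d\phi\ge\rho_-$ and the rearrangement of the $\delta$-range into $\sigma(1+\delta\rho_-)>1$ for the expansion estimate, and derivative formulas for $T\circ\Phi_\phi$ combined with Lemma \ref{Lem:BOundsGphi} for the uniform $C^3$ bound. If anything, your version is slightly cleaner, since you make the uniform constant $\sigma'=\sigma(1+\delta\rho_-)$ explicit and avoid the paper's unnecessary intermediate claim $|\delta\int\partial_1 H\,d\mu|<1$.
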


\begin{proof}
For simplicity, let us use the notation $\mu(H_x)$ for $\int_{\mathbb{T}} H(x,y)d\mu(y)$ and similarly for the derivatives. For each $x\in \mathbb{T}$ we have
    \[
        |(T\circ \Phi_{\phi})'(x)|=|T'(x+\delta\mu(H_x))\cdot (1+\delta\mu(\partial_1 H_x))|\ge \sigma|1+\delta \int \partial_1 H(x,y)d\mu(y)|. 
    \]
    Under the assumption on $\delta$, $|\delta \int \partial_1 H(x,y)d\mu(y)|<1$, 
    \[
\left|1+\delta \int \partial_1 H(x,y)d\mu(y)\right|>\sigma^{-1}
    \]
    and $|T_\phi'(x)|>1$ for each $x$.
 The first statement and the inequality on the first derivative follows immediately. For the bound on the $\mathcal C^3$-norm, it is sufficient to note that
    \[
(T\circ \Phi_{\mu})'''(x)=T'''\circ \Phi_\mu (\Phi_\mu')^3+3T''\circ \Phi_\mu \Phi_\mu' \Phi_\mu''+T'\circ \Phi_\mu \Phi_\mu'''
    \]
    and apply Lemma \ref{Lem:BOundsGphi}.
\end{proof}

%

\subsection{An example with deterministically coupled maps}\label{ex1}
We now describe the class of maps where we mean to apply Proposition \ref{fond} and get strong contraction in a neighborhood of the fixed points. 

Let us consider $T:\mathbb T\rightarrow \mathbb T$ defined by $T(x)=kx \mod%
1$, with $k>1$. Let $\delta > 0$ and fix $H \in C^{4}(\mathbb T,\mathbb{R})$. For $\mu$ a probability on $\mathbb T^{1}$, let us define a special case of the couplings introduced in Definition \ref{Def:Self-ConsOp} where $H$ does not depend on the variable $x$:

\begin{equation*}
\Phi _{\delta ,\mu }(x)=x+ \delta \mu(H) \quad \mod 1
\end{equation*}%
where 
\[
\mu(H)=\int_{\mathbb T}H(y) d\mu(y).
\]
We remark that this is a diffeomorphism from $\mathbb{T}$ to $\mathbb{T}$, which is just a translation depending on $\mu$,
and it is of the form considered in \cite{BL}. {This simple case of coupling is chosen to simplify the computations. Rather than some attractive or repulsive coupling as usually considered in the applications, it can be thought as a collective forcing applied to all the  subsystems, which however depends on their collective state.
}
If $\mu$ has a density $h\in L^1(\mathbb{T})$, the STO associated to this coupling is then%
\begin{equation*}
\mathcal{L}_{\delta }h=T_* \Lambda_{\delta ,h }h
\end{equation*}%
where, according to the previous notations, we denote by $\Lambda_{\delta ,h }$ the transfer operator of the coupling $\Phi_{\delta,h}$. In this particular case, if $m$ is the Lebesgue measure, we have
\begin{equation}\label{eq:Lambda}
\Lambda_{\delta ,h }(f)(x)=\frac{f}{\Phi'_{\delta,h}}\circ \Phi^{-1}_{\delta,h}(x)=f(x-\delta \mu(H))=f\left(x-\delta\int_{\mathbb{T}} H h dm \right).
\end{equation}

Note that, for any coupling strength $\delta>0$, such a
system preserves the Lebesgue measure, hence the constant function $1$ corresponds to a fixed density for which we have global exponential convergence to equilibrium .

The transfer operator associated to the system is non-singular, we will then consider it as an operator from $W^{1,1}$ to $W^{1,1}$. For $\eta \in W^{1,1}$, the transfer operator of the map $T$ acts as \[
T_*\eta(x)=\sum_{i=0}^{k-1}\eta\left(\frac{x+i}{k}\right)k^{-1}.
\]
    Moreover, by \eqref{eq:Lambda}, for each $\eta\in W^{1,1}$, $(\Lambda_{\delta,h}\eta)'=\eta'$ and $\|\Lambda_{\delta,h}\eta\|_{L^1}=\|f\|_{L^1}$. Therefore, it is immediate to check that, for any $\delta>0, \eta\in W^{1,1}, h\in W^{1,1}$, $T_*$ contracts the $L^1$ norm of the weak derivative
 \[
\|(T_*\Lambda_{\delta,h} \eta)'\|_{L^1}\le k^{-1}\|\eta'\|_{L^1}.
 \]
 It follows that, for each $\eta \in W^{1,1}$
which is a probability density in $\mathbb{T}$, we
have for each $n\in \mathbb{N}$%
\begin{equation*}
\|1-\mathcal{L}_{\delta}^{n}\eta \|_{W^{1,1}}\leq C k^{-n}
\end{equation*}
which gives exponential convergence to equilibrium.
\footnote{Note that in this particular case the convergence to equilibrium is global, and not just in a neighborhood of the fixed point $1$.}

In the following proposition we shall prove, applying Proposition \ref{fond} that any small perturbation of the uncoupled map still gives an STO that enjoys LOSC even in the strong coupling. 

More precisely, for $\epsilon_0 >0 $, we consider any $\{T_\epsilon\}_{0<\epsilon<\epsilon_0} \in \mathcal{C}^4(\mathbb{T},\mathbb{T})$ such that 
\begin{itemize}
\item $\|T-T_\epsilon\|_{\mathcal{C}^4} \le \epsilon$,
\item there is $\sigma'>0$ such that $T'_\epsilon(x) \geq\sigma'>1$ for each $x\in \mathbb{T}$.
\end{itemize}
Clearly, the STO in this case is just
\[
\mathcal{L}_{\delta }^{(\epsilon)}h=(T_\epsilon)_* \Lambda_{\delta ,h }h.
\]
Under these assumptions we have the following.

\begin{proposition}\label{prop:exstrong} 
For each $\delta>0$ there exists $\epsilon_0>0$ such that, for each $\epsilon\in [0,\epsilon_0]$ and for each $h_\epsilon \in W^{3,1}(\mathbb{T})$ which is a fixed point of  the self-consistent transfer operator $\mathcal{L}^{(\epsilon)}_{\delta}$, we have local exponential contraction in a neighborhood of $h_\epsilon$: there are $C,\varrho ,\lambda >0$ such that, for each $\epsilon \in [0,\epsilon_0]$ and for each $\eta \in W^{1,1}$, 
which is a probability density with $\|h_\epsilon-\eta \|_{W^{1,1}}\leq \varrho$, we
have  
\begin{equation*}
\|h_\epsilon-[\mathcal
{L}^{(\epsilon)}_{\delta }]^{n}\eta \|_{W^{1,1}}\leq Ce^{-\lambda n}\quad\quad n\in \mathbb{N}.
\end{equation*}
\end{proposition}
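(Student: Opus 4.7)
My plan is to apply Proposition \ref{fond} to the family $L^{(\epsilon)}_{\delta,f} := (T_\epsilon)_* \Lambda_{\delta,f}$ on the triple $B_w = L^1(\mathbb{T})$, $B_s = W^{1,1}(\mathbb{T})$, $B_{ss} = W^{2,1}(\mathbb{T})$, with all estimates uniform in $\epsilon \in [0, \epsilon_0]$. First I would verify the standing assumptions $\bf{(a)}$--$\bf{(f)}$: since $\Lambda_{\delta,f}$ is simply translation by $s_f := \delta \int H f\, dm$ (an isometry on every $W^{k,1}$), boundedness and the uniform Lasota--Yorke inequalities of $\bf{(b)}$ and $\bf{(c)}$ reduce to standard estimates for the transfer operators of the $\mathcal C^3$ uniformly expanding maps $T_\epsilon$; assumption $\bf{(d)}$ is automatic because $L^{(\epsilon)}_{\delta, h_\epsilon}$ is itself the transfer operator of the uniformly expanding map $x \mapsto T_\epsilon(x + s_{h_\epsilon})$; and $\bf{(e)}$ is the difference of two translations of $h$, controlled by $|s_f - s_g| \le \delta\|H\|_\infty \|f-g\|_{L^1}$. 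For $\bf{(f)}$, a Taylor expansion of $h(\,\cdot\, - s_h - \delta\int Hg\, dm)$ in the scalar shift yields the explicit rank-one differential
\[
\partial L^{(\epsilon)}_{\delta, h}(g) = -\delta \left( \int_\mathbb{T} H g \, dm \right) (T_\epsilon)_*\bigl[h'(\cdot - s_h)\bigr].
\]
Assumption $(1)$ of Proposition \ref{fond} is then immediate, since $h_\epsilon \in W^{3,1}$ gives $h_\epsilon' \in W^{2,1} = B_{ss}$, and $(T_\epsilon)_*$ preserves $W^{2,1}$.

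The heart of the argument is assumption $(2)$: producing a contracting iterate of $d\mathcal L^{(\epsilon)}_{\delta, h_\epsilon}$ on $V_s$. My approach is perturbative in $\epsilon$. At $\epsilon = 0$ the map $T(x) = kx \bmod 1$ preserves Lebesgue, so $h_0 \equiv 1$ is the unique fixed density in $W^{1,1}$; the identity $h_0' = 0$ kills the rank-one term and $d\mathcal L^{(0)}_{\delta, 1} = T_*\Lambda_{\delta,1}$ coincides with the transfer operator of $x \mapsto kx + k s_1 \bmod 1$. The derivative contraction $\|(T_*\Lambda_{\delta,1} g)'\|_{L^1} \le k^{-1}\|g'\|_{L^1}$ already used just before the statement, combined with convergence to equilibrium on $V_s$ for the linear expanding map, shows that some iterate of $d\mathcal L^{(0)}_{\delta,1}$ strictly contracts the full $W^{1,1}$ norm on $V_s$.

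For $\epsilon > 0$ I would transfer this gap through a Keller--Liverani spectral stability argument for quasi-compact operators. Lemma \ref{lem:LYdiff} supplies the uniform Lasota--Yorke inequality for the family $\{d\mathcal L^{(\epsilon)}_{\delta, h_\epsilon}\}$, provided $\|h_\epsilon\|_{W^{2,1}}$ is uniformly bounded, which in turn follows by applying a Lasota--Yorke inequality for the linear operator $L^{(\epsilon)}_{\delta, h_\epsilon}$ to its own fixed point. What remains is the mixed-norm convergence $\|d\mathcal L^{(\epsilon)}_{\delta, h_\epsilon} - d\mathcal L^{(0)}_{\delta, 1}\|_{W^{1,1} \to L^1} \to 0$; this splits into the linear part (controlled by $T_\epsilon \to T$ in $\mathcal C^1$ and by $|s_{h_\epsilon} - s_1|$) and the rank-one part, whose mixed norm is bounded by $\delta \|H\|_\infty \|h_\epsilon'\|_{L^1}$. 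Both vanish once $h_\epsilon \to 1$ in $W^{1,1}$.

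The hard step is precisely this statistical-stability statement. I would argue that $h_\epsilon$ is the invariant density of the uniformly expanding map $x \mapsto T_\epsilon(x + s_{h_\epsilon})$, which converges to $T\circ(\cdot + s_1)$ in $\mathcal C^3$ as $\epsilon \to 0$ (the shift closes self-consistently using compactness in $L^1$ and uniqueness of the Lebesgue fixed point at $\epsilon = 0$). Standard Keller--Liverani stability of invariant densities of expanding maps under $\mathcal C^2$ perturbations then gives $h_\epsilon \to 1$ in $W^{1,1}$. With spectral stability in hand, there is a single iterate $m$ and $\lambda < 1$, uniform in $\epsilon \in [0,\epsilon_0]$, such that $\|d\mathcal L^{(\epsilon),m}_{\delta, h_\epsilon}\|_{V_s \to V_s} \le \lambda$, so Proposition \ref{fond} applies uniformly and yields the local strong contraction of $\mathcal L^{(\epsilon)}_\delta$ at $h_\epsilon$, which is exactly the stated exponential convergence in $W^{1,1}$.
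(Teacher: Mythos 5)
Your proposal is correct and follows essentially the same route as the paper's proof: verify the standing assumptions for the translation coupling, compute the rank-one differential (your minus sign is in fact the right one), note that at $\epsilon=0$ the term involving $h_0'$ vanishes so the differential is the transfer operator of a translated linear expanding map with spectral radius $<1$ on $V_s$, and then combine the uniform Lasota--Yorke inequality from Lemma \ref{lem:LYdiff} with Keller--Liverani spectral stability and the statistical-stability bound $\|h_\epsilon-1\|_{W^{1,1}}=O(\epsilon)$ to obtain a uniformly contracting iterate of $d\mathcal{L}^{(\epsilon)}_{\delta,h_\epsilon}$, so that Proposition \ref{fond} applies. The only cosmetic differences are in the weak-closeness estimate (the paper argues by a dual pairing with an antiderivative trick, you use direct translation-difference and $C^1$-closeness bounds) and in how the closeness of $h_\epsilon$ to $1$ is justified (the paper cites statistical stability directly, exploiting that the linear map composed with any translation still preserves Lebesgue, which spares your self-consistent compactness step for the shift).
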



\begin{proof}
In order to apply our main result, Proposition \ref{fond}, to the perturbed system, we first need to check assumptions ${\bf (a)},..., {\bf (f)}$.  We show that for $\epsilon_0$ small enough the family of transfer operators $L^{(\epsilon)}_{\delta, h}=T_{(\epsilon)*}\circ \Lambda _{\delta ,h }$ acting on the spaces $%
B_{w}=L^{1},B_{s}=W^{1,1},B_{ss}=W^{2,1}$ satisfies the standing assumptions ${\bf (a)},..., {\bf (e)}$ of Section \ref{S2}.
Note that since the coupling is just a translation, 
for each $x\in \mathbb T$, $h\in P_s$, $0\leq\epsilon \leq \epsilon_0$
\[
|(T_\epsilon \circ \Phi_{\delta,h})'(x)|=|T_{\epsilon}'(x+\delta\mu(H))|\ge \sigma'>1, 
\]
and $\|T_\epsilon \circ \Phi_{\delta,h}\|_{C^4}=\|T_\epsilon \|_{C^4}$
so that  $T_\epsilon\circ \Phi_{\delta,h}$ is a uniformly expanding map, and satisfies a uniform Lasota-Yorke inequality independently on 
$\delta$. Thus assumptions ${\bf(b),(c), (d)}$ are 
verified. Under the current assumptions the existence of a fixed point $ h_\epsilon \in B_{ss}$ 
(assumption ${\bf(a)}$) and the regularity of the family of operators (assumption ${\bf(e)}$) are proved in  \cite[Section 7]{G}, in a more general setting.\footnote{Nevertheless, in \cite{G} the convergence to equilibrium is shown in a weak coupling situation.}

It remains to prove ${\bf (f)}$. Since all the maps involved are $C^4$ we have $h_\epsilon \in W^{3,1}(\mathbb T)$. Let us fix some $\eta \in
L^{1}(\mathbb T)$.\\
We show the existence of the operator $\partial L^{(\epsilon)}_{\delta, h}$ computing the Gateaux differential at $h_\epsilon$ with increment $\eta$, and showing that the formula is uniform as $\eta$ varies in the unit ball of the weak space, which in this case is $V_w$. Since for each $t\in \mathbb R$, $\Lambda_{\delta ,h_\epsilon +t \eta }(h_\epsilon )=\Lambda_{\delta ,h_\epsilon }(\Lambda_{\delta
,t \eta }(h_\epsilon))$, we have 
\begin{equation*}
\underset{%
t \rightarrow 0}{\lim }[T_\epsilon]_*\left(\frac{\Lambda_{\delta ,h_\epsilon +t \eta
}(h_\epsilon)-\Lambda_{\delta,h_\epsilon }(h_\epsilon)}{t}\right)=\underset{%
t \rightarrow 0}{\lim }[T_\epsilon]_*\left[\Lambda_{\delta ,h_\epsilon}\left(\frac{\Lambda_{\delta
,t \eta }(h_\epsilon )-h_\epsilon }{t}\right)\right].
\end{equation*}
Next, the family of functions 
\[
f_t:=\frac{\Lambda_{\delta
,t \eta }(h_\epsilon(x) )-h_\epsilon(x) }{t}=\frac{h_\epsilon(x-\delta t \int H \eta dm)-h_\epsilon(x)}{t}
\]
converges $\mu$-almost everywhere to $h_\epsilon' \delta\int H \eta dm$, where $h_\epsilon'$ is the weak derivative of $h_\epsilon$. 
Since $h_\epsilon \in W^{3,1}$, by the dominated convergence theorem we have that this limit also converges in $W^{1,1}$. Hence
\[
\underset{t \rightarrow 0}{\lim }[T_\epsilon]_*\left[\frac{\Lambda_{\delta ,h_\epsilon
+t \eta }(h_\epsilon )-\Lambda_{\delta ,h_\epsilon }(h_\epsilon )}{\epsilon }\right]=\delta\left(\int_{\mathbb T}\eta H  dm\right) ~[T_\epsilon]_*[\Lambda_{\delta ,h_\epsilon }(h_\epsilon ^{\prime })]
\] with convergence in $W^{1,1}$.
The verification of ${\bf (f)}$ follows then from the fact that the above computation is uniform as $\eta$ varies in the unit ball of 
 $V_w$ (since it depends on $\eta$ only through its integral against $H$). 
Applying Lemma \ref{lemder} we have now a formula for the differential of $L^{(\epsilon)}_{\delta,h_\epsilon}$ for each $\epsilon \in [0,\epsilon_0]$:
\begin{equation}\label{eq:dL1}
d\mathcal{L}^{(\epsilon)}_{\delta,h_\epsilon }(\eta )=[T_\epsilon]_*\Lambda_{\delta ,h_\epsilon }\eta +\delta  \left(\int_{\mathbb T} \eta H
dm\right)~[T_\epsilon]_*[\Lambda_{\delta ,h_\epsilon }(h_\epsilon ^{\prime })].
\end{equation}
Now we need to check the other two assumptions 1) and 2) of Proposition \ref{fond}.
 Since $h_\epsilon\in W^{1,3}$,  the operator $\partial L^{(\epsilon)}_{\delta,h_\epsilon}$ defined by $$\partial L^{(\epsilon)}_{\delta,h_\epsilon}(h)=\delta  \left(\int_{\mathbb T} \eta H
dm\right)~[T_\epsilon]_*[\Lambda_{\delta ,h_\epsilon }(h_\epsilon ^{\prime })]$$ is obviously bounded as an operator $V_w\to B_{ss}$ proving hypothesis 1).

Finally, to prove hypothesis 2) we show it for $\epsilon=0$ and then we use a result of \cite{KL} to see that the spectrum of the perturbed differential is close to that of the unperturbed differential, from which the conclusion will follow.

The Lebesgue measure $m$ is a fixed point for\ $\mathcal{L}^{0}_{\delta }$ and
its density $h:=h_0=1$ is obviously contained in $W^{3,1}.$ If we compute the differential at the Lebesgue measure we have $h ^{\prime }=0$ everywhere, so that $d%
\mathcal{L}^{0}_{\delta,h}=T_*\Lambda_{\delta ,h }$ \ and we conclude that the operator $d%
\mathcal{L}_{\delta,h}$ restricted to $V_{s}$ has
spectral radius strictly smaller than one, since this is the transfer operator associated to a translation of the linear map $T_0$.\\
When $\epsilon \neq 0$ the invariant density is not constant, but close to it.
We can thus use the result of \cite{KL} to get the stability of the spectrum under the perturbation leading $\mathcal{L}^{0}_{\delta,h}$ to $\mathcal{L}^{\epsilon}_{\delta,h}$ . Since, there is a compact immersion of $B_s$ into $B_w$, to apply the main result of \cite{KL}  we need to find $\epsilon_0>0$ such that we can verify that the family of operators $\mathcal{L}^{\epsilon}_{\delta,h}$,  satisfy:
 \begin{itemize}
     \item[(i)] (Uniform Lasota-Yorke inequality) There are $C_1,C_2,M>0,$ and $\sigma<1$ such that, for each $\epsilon \in [0, \epsilon_0)$ and $n\in \mathbb{N}$,
     \[
    \|[d\mathcal{L}^{(\epsilon)}_{\delta,h_\epsilon}]^n \eta\|_{L^1}\le C_1 M^n\|\eta\|_{L^1}
     \]
     and
     \[
    \|[d\mathcal{L}^{(\epsilon)}_{\delta,h_\epsilon}]^n \eta\|_{W^{1,1,}} \le C_2\sigma^{n}\|\eta\|_{W^{1,1}}+C_1 M^n \|\eta\|_{L^1}.
     \]
     \item[(ii)] (Weak closeness) There exists $C_3>0$ such that, for each $\epsilon \in (0,\epsilon_0)$,
     \[
     \sup_{\eta\in W^{1,1}: \|\eta\|_{W^{1,1}}\le 1} \|d\mathcal{L}^{(\epsilon)}_{\delta,h_\epsilon}\eta-d\mathcal{L}^{(0)}_{\delta,h}\eta\|_{L^1}\le C_3 \epsilon.
     \]
 \end{itemize}

We can check in the proof of Lemma \ref{lem:LYdiff} that $d\mathcal{L}^{(\epsilon)}_{\delta,h_\epsilon}$ satisfies a Lasota-Yorke inequality in $W^{1,1}$ and $L^1$ whose coefficients depend on the ones of the Lasota Yorke inequality of $L^{\epsilon}_{\delta,h}$ and to the norm $\|\partial L^\epsilon_{\delta,h}\|_{w\to s}.$ By Remark \ref{normaL} this in turn depends on $\|h_\epsilon\|_{ss}$, which is uniformly bounded as $\epsilon$ ranges in some interval $[0,\epsilon_0)$, provided $\epsilon_0 $ is small enough.

By the fact that $T$ and $T_\epsilon$ are $\epsilon$ close in $\mathcal{C}^4$,
the coefficients of the Lasota-Yorke inequalities of the operators
$L^{\epsilon}_{\delta,h}$ are uniformly bounded, again  as $\epsilon$ ranges in some interval $[0,\epsilon_0)$, provided $\epsilon_0 $ is small enough.
Hence we get that $d\mathcal{L}^{(\epsilon)}_{\delta,h_\epsilon}$ satisfies a Lasota-Yorke inequality in $W^{1,1}$ and $L^1$ which is uniform as $0 \leq \epsilon<\epsilon_0$ provided $\epsilon_0 $ is small enough.

Let us show $(ii)$. 
Before showing this we note that for any choice of the fixed probability densities $h_{\epsilon}$ of the operators $\cal{L}^{\epsilon}_{\delta}$ we have \begin{equation} \label{18}\|h_{\epsilon}-1\|_{W^{1,1}}\le C\epsilon.
\end{equation}
This is because $h_\epsilon$ is the unique fixed probability measure in $W^{3,1}$ of the expanding map $T_{\epsilon}\circ \Phi_{\delta,h_\epsilon}$ where $\Phi_{\delta,h_\epsilon}$ is a translation, therefore \eqref{18} follows from \cite[Proposition 48]{Gal}. Then it is the unique invariant probability density of a map which is  $\epsilon -$close in the $C^r$ topology to an expanding map having linear branches. By the statistical stability of such maps we hence have \eqref{18}.

By \eqref{eq:dL1} and since we saw that $d\mathcal{L}^{(0)}_{\delta,h}\eta=T_*\Lambda_{\delta,h}\eta$ we have, for each $\eta\in  W^{1,1}$ and $f$ continuous,
\[
\begin{split}
&\int_{\mathbb{T}} (d\mathcal{L}^{(\epsilon)}_{\delta,h_\epsilon}\eta-d\mathcal{L}^{(0)}_{\delta,h}\eta)\cdot f=\\
&=\int_{\mathbb{T}} \big([T_{\epsilon}]_*\Lambda_{\delta ,h_\epsilon }(\eta) +\delta m(\eta H)[T_{\epsilon}]_*\Lambda_{\delta ,h_\epsilon }(h_\epsilon')-T_*\Lambda_{\delta ,h }(\eta)\big)f\\
&=\int_{\mathbb{T}} [\Lambda_{\delta ,h_\epsilon }(\eta)+\delta m(\eta H)\Lambda_{\delta ,h_\epsilon }(h_\epsilon')]\cdot f\circ T_\epsilon-\Lambda_{\delta ,h }(\eta)\cdot f\circ T_.
\end{split}
\]
Let us call $\hat \eta_\epsilon=\Lambda_{\delta ,h_\epsilon}(\eta)+\delta m(\eta H)\Lambda_{\delta ,h_\epsilon }(h_\epsilon')$. By the above equation we can write
\begin{equation}\label{eq:TOP}
\int_{\mathbb{T}} (d\mathcal{L}^{(\epsilon)}_{\delta,h_\epsilon}\eta-d\mathcal{L}^{(0)}_{\delta,h}\eta)\cdot f=\int_{\mathbb{T}} \hat\eta_\epsilon[ f\circ T_\epsilon-f\circ T]+\int_{\mathbb{ T}} [\hat\eta_\epsilon -\Lambda_{\delta ,h }\eta] f\circ T.
\end{equation}
Setting $F_\epsilon(x)=\frac 1k\int_{T(x)}^{T_\epsilon(x)} f(t)dt$ we have
\[
f\circ T(x)=\frac 1k T'_\epsilon(x)f\circ T_\epsilon-F_\epsilon'(x).
\]
Therefore,
\[
\int_{\mathbb{T}} \hat \eta_\epsilon\cdot (f\circ T-f\circ T_\epsilon)=\int_{\mathbb{T}}\hat \eta_\epsilon\left(\frac 1k T'_\epsilon(x)-1\right)f\circ T_\epsilon -\int_{\mathbb{T}} \hat \eta_\epsilon F'_\epsilon.
\]
Since, for each $x\in \mathbb{T}$,
\[
|F_\epsilon'(x)|\le k^{-1} \|T_\epsilon-T\|_{\mathcal C^1}\|f\|_{L^{\infty}}\le k^{-1} \epsilon \|f\|_{L^{\infty}}, 
\]
and
\[
|1-k^{-1}T'_\epsilon(x)|\le k^{-1}\epsilon,
\]
applying the above to \eqref{eq:TOP} and noting that, by \eqref{18}, $\|\hat{\eta}_\epsilon-\Lambda_{\delta,h}\eta\|_{W^{1,1}}\le C\epsilon$, we obtain
\[
\int_{\mathbb{T}} |(d\mathcal{L}^{(\epsilon)}_{\delta,h_\epsilon}\eta-d\mathcal{L}^{(0)}_{\delta,h}\eta) f|\le k^{-1} \epsilon \|\hat \eta_\epsilon\|_{W^{1,1}} \|f\|_{L^{\infty}}+C k^{-1} \epsilon \|\hat \eta_\epsilon\|_{L^1}\|f\|_{L^{\infty}}.
\]
Item $(ii)$ follows since, by \eqref{eq:Lambda} ,$\|\hat \eta_\epsilon\|_{W^{1,1}}\le C\|\eta\|_{W^{1,1}}$.\\
By \cite[Theorem 1]{KL} we conclude that, for $\epsilon_0$ sufficiently small, the operator $d%
\mathcal{L}^{(\epsilon)}_{\delta,h_\epsilon}$ restricted to $V_{s}$ has
spectral radius strictly smaller than one and Proposition \ref{fond} implies that the family of STO for the perturbed system enjoy local exponential convergence to equilibrium.
\end{proof}

\begin{remark}
    Note that, in the above argument, we used the specific form of the uncoupled dynamics (linear expanding) only to ensure that the spectral radius of the associated differential of the STO contracts the set of zero average measures. Therefore, if one can prove this independently for a more general expanding map, the argument for the perturbation remains valid as long as the distortion is uniformly bounded, as it does not critically rely on the linearity of the uncoupled map.
\end{remark}

\subsection{An example with random coupling
}\label{ex2}

In this section, we describe a class of examples involving stochastic coupling, for which we can prove, using Proposition \ref{fond}, that while there is only one invariant measure in $L^1$
  for the associated self-consistent transfer operator in the weak coupling regime, multiple locally stable measures exist in a stronger coupling regime.

The example represents stochastic mean-field dynamics, where each subsystem has a certain probability of either switching to a specific distribution of states (e.g., approaching the average or dominant state) or following its internal dynamics. The probability of switching depends on the global state of the system; for example, it may depend on how dominant the average state is. This is measured by the ``barycenter" of the distribution of the systems in the phase space.

The phase space of the systems will be $\mathbb{T}$ and, as
uncoupled dynamics, we consider $T:\mathbb{T}\rightarrow \mathbb{T}$ to be a $C^{4}$
expanding map of the circle.\\
Since in the following we are interested in measures absolutely continuous with respect to the Lebesgue measure , we will let the transfer operator $L_{T}$ \ associated to $T$ acting
on densities $L_{T}:L^{1}(\mathbb{T)}\rightarrow L^{1}(\mathbb{T)}$. \ Let
us define more precisely all the tools in the construction and give a motivation for choice of the coupling. In the
following we will consider $L^{1},W^{1,1},W^{2,1}$ as weak, strong and
strongest space respectively, hence for example $P_{w}$ will be the set of probability
measures in $L^{1}(\mathbb{T)}$.

Let us suppose that at some time the global state of the system is
represented by a certain measure $\mu $ having density $\phi \in L^{1}(%
\mathbb{T)}$ and at the next iterate each subsystem has a certain
probability to follow the dynamics $T$ or move to a certain distribution $%
\psi _{\phi }\in P_{ss}$ depending on $\phi .$ To formalize this we define
the weight $W_{\phi }$ of the barycenter of distribution $\phi $ as%
\begin{equation*}
W_{\phi }=|\int_{\mathbb{T}}e^{ix}d\mu |^{2}
\end{equation*}%
and the probability for a subsystem to switch to the distribution $\psi
_{\phi }$ will be proportional to $W_{\phi }$ (see $(\ref{traskip})$).

About the function $\phi \rightarrow \psi _{\phi }$ we suppose that:

\begin{itemize}
\item $\sup_{\phi \in P_{w}}\|\psi _{\phi }\|_{W^{2,1}}:=M\in \mathbb{R}$ and


\item $\phi \to \psi _{\phi }$ is Lipschitz in $\phi $: $\exists C\geq 0$ such
that $\|\psi _{\phi +g}-\psi _{\phi }\|_{W^{1,1}}\leq C\delta \|g\|_{L^{1}}$.
\end{itemize}

For example $\psi _{\phi }$ can be a Gaussian centered at the barycenter of
\ the distribution $\phi .$ \ When $W_{\phi }\neq 0$ the barycenter $\bar{%
\phi}$ of $\phi $ on $\mathbb{T}$ can be defined by%
\begin{equation*}
\bar{\phi}=\arg (\int_{\mathbb{T}} e^{ix}d\mu ).
\end{equation*}%
\par
In this example one we can choose $\sigma >0$ and define 
\begin{equation}\label{pzi}
\psi _{\phi }=\frac{1}{\sqrt{2\pi \sigma ^{2}}}\sum_{k\in \mathbb{Z}%
}e^{\frac{-(x-\bar{\phi}-2\pi k)^{2}}{2\sigma ^{2}}}.
\end{equation}%

Considering some $\delta \geq 0$, as a coupling strength parameter, the
transfer operator $L_{\delta ,\phi }:L^{1}\rightarrow L^{1}$ associated to the coupled system can be defined as%
\begin{equation}
L_{\delta ,\phi }(f)= 
\begin{cases}
(1-\delta W_{\phi })L_{T}(f)+[\delta W_{\phi }\int f~dx]~\psi _{\phi
}\quad  &\text{when}\quad\delta W_{\phi }\leq 1, \\ \\
\psi _{\phi }\int f~dx\quad  &\text{when}\quad\delta W_{\phi }>1.%
\end{cases}%
  \label{traskip}
\end{equation}

The associated self consistend transfer operator is then defined as%
\begin{equation*}
\mathcal{L}_{\delta }(f)=L_{\delta ,f }(f).
\end{equation*}

In this framework we can prove the following propositions, one describing
the global behavior of the self-consistent transfer operator in the weak
coupling regime, and then in the strong coupling one.

\begin{lemma}
\label{lemmassumpt}Under the above assumptions,
suppose that $T:\mathbb{%
T\rightarrow T}$ has an invariant density $h_{0}\in W^{3,1}$ such that%
\footnote{%
The doubling map and many other maps having a central symmetry have this
property.} $W_{h_{0}}=0$. Consider $\tilde{\phi}\in
L^{1}$ such that $\delta W_{\tilde{\phi}}>1$.
Let us suppose furthermore that  
$\phi \rightarrow \psi _{\phi }$ is \ differentiable \ at  $\phi= \tilde{\phi}$:  there is a bounded linear  $\dot{\psi}_{\tilde{\phi} }:L^{1}\rightarrow W^{2,1}$
such that 
\begin{equation*}
\lim_{g\in V_{L^{1}}\|g\|_{L^{1}}\rightarrow 0}\frac{\|\psi _{\tilde{\phi}  +g}-\psi
_{\tilde{\phi}  }-\dot{\psi}_{\tilde{\phi} }(g)\|_{W^{1,1}}}{\|g\|_{L^{1}}}=0 
\end{equation*} and
$\|\dot{\psi}_{\tilde{\phi} }\|_{w\to s}<1.$ \footnote{A simple example satisfying this  can be constructed by considering in the example given at \eqref{pzi}, $\sigma$ large enough, as for $\sigma \to \infty$, $\psi_\phi$ tends to a, the uniform distribution. }
Then $h_{0}$ and $\psi_{\tilde{\phi}}$
are fixed points of $\mathcal{L}_{\delta }$, and the family of operators $%
L_{\delta ,\phi }$ satisfies the assumptions $\bf{(a)},...,\bf{(f)}$ of section \ref{S2}
for both these fixed points (i.e. considering the assumptions $\bf{(a)},...,\bf{(f)}$
both for $h=h_{0}$ and $h=\tilde{\phi}$).
\end{lemma}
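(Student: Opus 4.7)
The plan is to show that under the stated hypotheses both densities are fixed by $\mathcal L_\delta$ and that the family $L_{\delta,\phi}$ satisfies the structural assumptions $\bf{(a)}$--$\bf{(f)}$ of Section \ref{S2} at each of them. The starting observation is that the piecewise definition \eqref{traskip} can be rewritten as the single formula
\begin{equation*}
L_{\delta,\phi}(f) = (1-\alpha_\phi)L_T(f) + \alpha_\phi \left(\int f\,dx\right)\psi_\phi,\qquad \alpha_\phi := \min(\delta W_\phi,1)\in[0,1],
\end{equation*}
since in the second branch $\alpha_\phi=1$ kills the $L_T$ summand. The fixed-point claims follow: at $h_0$ one has $\alpha_{h_0}=0$ (because $W_{h_0}=0$), so $\mathcal L_\delta(h_0)=L_T(h_0)=h_0$; at $\psi_{\tilde\phi}$, using the natural identification $\tilde\phi=\psi_{\tilde\phi}$ implicit in \eqref{pzi} (where $\psi_\phi$ depends only on the barycenter and hence $\psi_{\psi_\phi}=\psi_\phi$), the hypothesis $\delta W_{\tilde\phi}>1$ gives $\mathcal L_\delta(\psi_{\tilde\phi})=\psi_{\psi_{\tilde\phi}}\int\psi_{\tilde\phi}\,dx=\psi_{\tilde\phi}$. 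Boundedness $\bf{(b)}$ on $L^1,W^{1,1},W^{2,1}$ is then immediate from the $C^4$ uniform expansion of $T$ and the uniform bound $\sup_\phi\|\psi_\phi\|_{W^{2,1}}\le M$ on the rank-one summand.

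For the sequential Lasota--Yorke $\bf{(c)}$, I would iterate the Markov property $\int L_{\delta,f}(u)\,dx=\int u\,dx$ and the single formula above to obtain by induction the decomposition
\begin{equation*}
L_{\delta,f_1}\circ\cdots\circ L_{\delta,f_n}(g)=\prod_{i=1}^n(1-\alpha_{f_i})\,L_T^n(g)+\left(\int g\,dx\right)\sum_{k=1}^n\alpha_{f_k}\prod_{i<k}(1-\alpha_{f_i})\,L_T^{k-1}\psi_{f_k}.
\end{equation*}
The telescoping identity $\sum_k\alpha_{f_k}\prod_{i<k}(1-\alpha_{f_i})=1-\prod_i(1-\alpha_{f_i})$ then yields $\|\cdot\|_{L^1}\le\|g\|_{L^1}$, and the LY inequality of $L_T$ applied summand-by-summand gives in $W^{1,1}$ a first contribution bounded by $\lambda^n A\|g\|_{W^{1,1}}+B\|g\|_{L^1}$ and a second one controlled by $AM\sum_k\lambda^{k-1}\le AM/(1-\lambda)$, assembling into the required uniform LY. Assumption $\bf{(d)}$ splits trivially: at $h_0$ the operator $L_{\delta,h_0}=L_T$ contracts exponentially on $V_s$ by the standard theory; at $\psi_{\tilde\phi}$ the operator is rank one with $V_s\subset\ker$, so $L_{\delta,\psi_{\tilde\phi}}^n(\mu)\equiv 0$ for $\mu\in V_s$ and $n\ge 1$. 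For Lipschitz regularity $\bf{(e)}$ the identity
$L_{\delta,f}(h)-L_{\delta,g}(h)=(\alpha_g-\alpha_f)L_T(h)+(\alpha_f\psi_f-\alpha_g\psi_g)\int h\,dx$
reduces everything to the bound $|\alpha_f-\alpha_g|\le 2\delta\|f-g\|_{L^1}$ (since $W_\phi=|\int e^{ix}\phi\,dx|^2$ is Lipschitz in $L^1$) together with the standing Lipschitz hypothesis on $\phi\mapsto\psi_\phi$; the second estimate in $\bf{(e)}$ follows in the same way.

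Finally, for Fréchet differentiability $\bf{(f)}$ the two fixed points behave very differently. At $h_0$, since $W_{h_0}=0$ is the minimum of $W$ and $W_\phi$ is the squared modulus of the linear functional $\phi\mapsto\int e^{ix}\phi\,dx$, one finds $\alpha_{h_0+g}=\delta|\int e^{ix}g\,dx|^2=O(\|g\|_{L^1}^2)$, hence $L_{\delta,h_0+g}(h_0)-L_{\delta,h_0}(h_0)=\alpha_{h_0+g}(\psi_{h_0+g}-h_0)=O(\|g\|_{L^1}^2)$ in $W^{1,1}$, and thus $\partial L_{\delta,h_0}\equiv 0$. At $\psi_{\tilde\phi}$, for $g$ small in $L^1$ one remains in the branch $\delta W_{\psi_{\tilde\phi}+g}>1$, so $L_{\delta,\psi_{\tilde\phi}+g}(\psi_{\tilde\phi})=\psi_{\psi_{\tilde\phi}+g}$, and the stated differentiability of $\phi\mapsto\psi_\phi$ at $\tilde\phi=\psi_{\tilde\phi}$ gives $\partial L_{\delta,\psi_{\tilde\phi}}(g)=\dot\psi_{\tilde\phi}(g)$ with convergence in $W^{1,1}$. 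I expect the main obstacle to be the uniform Lasota--Yorke step: a naive iterated estimate fails because each stochastic switching event replaces the dynamics by a rank-one operator whose range $\psi_{f_k}$ carries nonzero strong norm, so one must use the explicit decomposition above combined with the contractive rate of $L_T$ to absorb the accumulated $\psi_{f_k}$-terms into a convergent geometric series. A secondary subtlety is aligning the differentiability of $\phi\mapsto\psi_\phi$ at $\tilde\phi$ with what is required at $h=\psi_{\tilde\phi}$, which is why the identification $\tilde\phi=\psi_{\tilde\phi}$ (automatic in \eqref{pzi}) is used.
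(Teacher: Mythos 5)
Your proposal is correct and follows essentially the same route as the paper: the same branch analysis gives the fixed points ($L_{\delta,h_0}=L_T$ since $W_{h_0}=0$, and the rank-one branch at $\psi_{\tilde\phi}$ under the identification $\tilde\phi=\psi_{\tilde\phi}$, which the paper also uses implicitly), the same Lipschitz bound on $\phi\mapsto W_\phi$ yields $\textbf{(e)}$, and the same computations give $\partial L_{\delta,h_0}=0$ (your quadratic estimate $W_{h_0+g}=O(\|g\|_{L^1}^2)$ sharpens the paper's $o(\|g\|_w)$) and $\partial L_{\delta,\psi_{\tilde\phi}}=\dot\psi_{\tilde\phi}$. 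The only divergence is in $\textbf{(c)}$: you derive an explicit formula for the sequential composition and sum a geometric series, whereas the paper just proves the uniform one-step inequality $\|L_{\delta,\phi}f\|_s\le\lambda\|f\|_s+(B+M)\|f\|_w$ (a convex combination of the Lasota--Yorke bound for $L_T$ and the rank-one bound $M\|f\|_w$) and lets the sequential version follow by straightforward iteration together with weak-norm boundedness; your stated obstacle that ``naive iteration fails'' is therefore not actually an obstacle, since the switching contribution is controlled by the weak norm and is simply absorbed into the $B\|\cdot\|_w$ term, though your explicit decomposition is also valid and has the minor advantage of handling the two branches of \eqref{traskip} uniformly via $\alpha_\phi=\min(\delta W_\phi,1)$.
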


\begin{proof}
The assumptions $\bf{(a)},\bf{(b)}$ are trivial, the assumption $\bf{(c)}$ comes from the fact
that when $\delta W_{\phi }\leq 1$%
\begin{eqnarray*}
\|L_{\delta ,\phi }(f)\|_{s} &=&\|(1-\delta W_{\phi })L_{T}(f)+[\delta
W_{\phi }\int f~dx]~\psi _{\phi }\|_{s} \\
&\leq &(1-\delta W_{\phi })[\lambda \|f\|_{s}+B\|f\|_{w}]+[\delta W_{\phi
}]~M|\int f~dx| \\
&\leq &\lambda \|f\|_{s}+[B+M]\|f\|_{w}.
\end{eqnarray*}

While, when $\delta W_{\phi }>1$%
\begin{equation*}
\|L_{\delta ,\phi }(f)\|_s=\|\psi _{\phi }\int f~dx\|_{s}\leq M\|f\|_{w}.
\end{equation*}

The assumption $\bf{(d)}$ \ comes from the fact that since $W_{h_0}=0$ then $%
L_{\delta ,h_{0}}$ is just the transfer operator of an expanding map. While
in the other case $L_{\delta ,\psi _{\phi }}(f)=\psi _{\phi }\int f~dx=0$
when $\int f~dx=0$.

To verify assumption $\bf{(e)}$ we consider $\phi _{1},\phi _{2}\in
P_{w}$ . We remark that 
\begin{eqnarray*}
|W_{\phi _{1}}-W_{\phi _{2}}| &=&||\int_{\mathbb{T}}e^{ix}~\phi
_{1}(x)~dx|^2-|\int_{\mathbb{T}}e^{ix}~\phi _{2}(x)~dx|^2| \\
&\leq &4\pi \|\phi _{1}-\phi _{2}\|_{L^{1}}.
\end{eqnarray*}%
Then%
\begin{eqnarray*}
\|L_{\delta ,\phi _{1}}(f)-L_{\delta ,\phi _{2}}(f)\|_{w} &=&\|L_{\delta
,\phi _{1}}(f)-L_{\delta ,\phi _{2}}(f)\|_{L^{1}} \\
&\leq &\|(1-\delta W_{\phi _{1}})L_{T}(f)+[\delta W_{\phi _{1}}\int
f~dx]~\psi _{\phi _{1}} \\
&&-(1-\delta W_{\phi _{2}})L_{T}(f)+[\delta W_{\phi _{2}}\int f~dx]~\psi
_{\phi _{2}}\|_{w} \\
&\leq &4\pi \delta \|\phi _{1}-\phi _{2}\|_{L^{1}}\|f\|_{L^{1}}+4M\pi \delta
\|\phi _{1}-\phi _{2}\|_{L^{1}}\|f\|_{L^{1}} \\
&&+\|f\|_{L^{1}}C\delta \|\phi _{1}-\phi _{2}\|_{L^{1}}.
\end{eqnarray*}

Similar estimates can be achieved for $\|L_{\delta ,\phi _{1}}(f)-L_{\delta
,\phi _{2}}(f)\|_{s}$ using the fact that $L_{T}:B_{s}\rightarrow B_{s}$ is
bounded, completing the verification of $\bf{(e)}.$

Now we verify the differentiability assumption $\bf{(f)}.$ Let us check the
definition first at $h_{0}$, then at $\psi _{\tilde{\phi} }.$

We have to verify that there is $\partial L_{\delta ,h_{0}}$such that 
\begin{equation}
\lim_{g\in V_{w}\Vert g\Vert _{w}\rightarrow 0}\frac{\Vert L_{\delta
,h_{0}+g}(h)-L_{\delta ,h_{0}}(h)-\partial L_{\delta ,h_{0}}(g)\Vert _{s}}{%
\Vert g\Vert _{w}}=0.
\end{equation}

We first compute $\partial L_{\delta ,h_{0}}$ proving that $L_{\delta
,h_{0}+g}(h)-L_{\delta ,h_{0}}(h)=\partial L_{\delta ,h_{0}}(g)+o(g)$ where
\ $\frac{\|o(g)\|_{s}}{\|g\|_{w}}\rightarrow 0$ for $g\rightarrow 0$ in $%
V_{w}$.

First, we remark that, by the assumptions, $\delta W_{h_{0}+g}<1$  for each $g$ such that $\|g\|_{w}$ is small enough, furthermore $W_{h_{0}+g}=o(||g||_{w})$ as $||g||_{w}\to 0$.

Thus in this case
\begin{eqnarray*}
L_{\delta ,h_{0}+g}(h_{0})-L_{\delta ,h_{0}}(h_{0}) &=&(1-\delta
W_{h_{0}+g})L_{T}(h_{0})+\delta W_{h_{0}+g}\psi _{h_{0}+g}-h_{0} \\
&=&-\delta W_{h_{0}+g}h_{0}+\delta W_{h_{0}+g}\psi _{h_{0}+g} \\
&=& o_{w\to s}(g)
\end{eqnarray*}
as $||g||_{w}\to 0$, where $\dfrac{\|o_{w\to s}(g)\|_s}{\|g\|_{w}}\to  0$ \  uniformly on $g$ as $\|g\|_{w}\rightarrow 0$.
By this, the differential $\partial L_{\delta ,h_{0}}=0$.

Now we apply the assumption $~\delta W_{\tilde{\phi}}>1$ and compute the
differential at $\psi _{\tilde{\phi}}$. We remark that for $g\in V_{w}$
small enough \ $\delta W_{\psi _{\tilde{\phi}}+g}>1$, then%
\begin{eqnarray*}
L_{\delta ,\psi _{\tilde{\phi}}+g}(\psi _{\tilde{\phi}})-L_{\delta ,\psi _{%
\tilde{\phi}}}(\psi _{\tilde{\phi}}) &=&[\psi _{\tilde{\phi}+g}-\psi _{%
\tilde{\phi}}] \\
&=&\dot{\psi}_{\tilde{\phi}}(g)+o_{w\to s}(g)
\end{eqnarray*}%
were as above $\frac{\|o_{w\to s}(g)\|_{s}}{\|g\|_{w}}\rightarrow 0$ for $g\rightarrow 0$ in $%
V_{w}$. Thus, $\partial L_{\delta,{ \psi_{\bar \phi}}}(g)=\dot{\psi}_{\tilde{\phi}}(g).$
\end{proof}

The following proposition describes the behavior of the system in the weak
coupling behavior.
\begin{proposition}
Under the assumptions of Lemma \ref{lemmassumpt}, there is $0<\bar{\delta}<1$
\ such that for each $\delta \leq \bar{\delta}$ the self-consistent operator $%
\mathcal{L}_{\delta }$ defined above has only $h_{0}$ as invariant
probability density in $L^{1}.$
\end{proposition}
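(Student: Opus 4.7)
The strategy is to exploit the vanishing $W_{h_0}=0$ to derive a quadratic self-consistency inequality for the perturbation $\tilde f := f-h_0$, which forces $\tilde f$ to be either zero or of large $L^1$-norm --- the latter being ruled out by the trivial universal bound $\|\tilde f\|_{L^1}\le 2$.

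First, observe that $W_\phi=|\int_{\mathbb{T}} e^{ix}\phi(x)\,dx|^2\le 1$ for every probability density $\phi$, so taking $\bar\delta<1$ ensures that for every $\delta\le\bar\delta$ only the first branch of \eqref{traskip} is active. Let $f\in L^1$ be a probability density with $\mathcal{L}_\delta f=f$, and set $\alpha:=\delta W_f\in[0,\delta]$. Using $h_0=L_T h_0$, subtracting $h_0$ from both sides of $f=(1-\alpha)L_T f+\alpha\psi_f$ gives the key identity
\[
(I-(1-\alpha)L_T)\tilde f \;=\; \alpha(\psi_f-h_0).
\]

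Next, I would invert $I-(1-\alpha)L_T$ on $L^1$ via the Neumann series, justified for $\alpha>0$ by $\|L_T\|_{L^1\to L^1}\le 1$, writing
\[
\tilde f \;=\; \alpha\sum_{n=0}^\infty(1-\alpha)^n L_T^n(\psi_f-h_0).
\]
Since $\psi_f-h_0\in V_s$ with $W^{1,1}$-norm uniformly bounded by $K_0:=M+\|h_0\|_{W^{1,1}}$, and since for a $C^4$ uniformly expanding map $L_T$ has spectral gap on $V_s$ (so $\|L_T^n\mu\|_{L^1}\le C\rho^n\|\mu\|_{W^{1,1}}$ with $\rho<1$), summing yields
\[
\|\tilde f\|_{L^1}\le K_1\alpha,\qquad K_1:=\frac{CK_0}{1-\rho}.
\]
The degenerate case $\alpha=0$ reduces to $\tilde f=L_T\tilde f$ with $\int\tilde f=0$; unique ergodicity of $T$ then gives $\tilde f=0$, consistent with the bound.

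Finally, I would close the loop using the standing hypothesis $W_{h_0}=0$: since $\int e^{ix}h_0\,dx=0$, one has $W_f=|\int e^{ix}\tilde f\,dx|^2\le\|\tilde f\|_{L^1}^2$, and hence $\alpha\le\delta\|\tilde f\|_{L^1}^2$. Combined with the previous estimate,
\[
\|\tilde f\|_{L^1}\le \delta K_1\|\tilde f\|_{L^1}^2.
\]
If $\tilde f\ne 0$, dividing forces $\|\tilde f\|_{L^1}\ge 1/(\delta K_1)$, which exceeds the trivial bound $\|\tilde f\|_{L^1}\le 2$ as soon as $\bar\delta<1/(2K_1)$ --- a contradiction. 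Hence $\tilde f=0$ and $f=h_0$, with the choice $\bar\delta:=\min\{1/(2K_1),1/2\}$. The main obstacle of the plan is securing the prefactor $\alpha$ in the bound on $\|\tilde f\|_{L^1}$: the plain $L^1$-contraction $\|L_T\|\le 1$ only recovers the trivial $\|\tilde f\|_{L^1}\le 2$; the genuine input is the spectral gap of $L_T$ on $V_s$, which lets one extract the extra $\alpha$-factor from the geometric-with-decay series. Once this is in place, the quadratic relation $W_f\le\|\tilde f\|_{L^1}^2$, a direct consequence of $W_{h_0}=0$, closes the argument.
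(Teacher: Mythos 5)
Your proposal is correct, but it takes a genuinely different route from the paper. The paper appeals to the abstract uniqueness criterion of Theorem 4 of \cite{G}: it checks that the frozen operators $L_{\delta,\phi}$ have unique, uniformly regular fixed densities $h_{\delta,\phi}$, that \eqref{e1.5} holds, and that $\phi\mapsto h_{\delta,\phi}$ is $\delta K_2$-Lipschitz in $L^1$ (obtained from a uniform resolvent bound $\|(I-L_{\delta,\phi})^{-1}\|_{V_s\to V_s}\le M_2$), so that for $\delta$ small the self-consistent fixed point is unique; the hypothesis $W_{h_0}=0$ enters only to identify $h_0$ as that fixed point. You instead argue directly on the fixed-point equation: since $\delta W_f\le\delta<1$ only the first branch of \eqref{traskip} is active, you write $f=(1-\alpha)L_Tf+\alpha\psi_f$ with $\alpha=\delta W_f$, subtract $h_0=L_Th_0$, invert $I-(1-\alpha)L_T$ by a Neumann series on $L^1$, and use the exponential convergence to equilibrium of $L_T$ on zero-average $W^{1,1}$ data (applied to $\psi_f-h_0$, whose $W^{1,1}$ norm is bounded by $M+\|h_0\|_{W^{1,1}}$) to get $\|\tilde f\|_{L^1}\le K_1\alpha$; the structural fact $W_{h_0}=0$ then gives the quadratic bound $\alpha\le\delta\|\tilde f\|_{L^1}^2$, incompatible with $\|\tilde f\|_{L^1}\le 2$ for $\delta<1/(2K_1)$ unless $\tilde f=0$. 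This buys a short, self-contained proof with an explicit $\bar\delta$, valid for arbitrary $L^1$ fixed densities (regularity is carried by $\psi_f-h_0$, not by $f$) and with no need for the frozen fixed points $h_{\delta,\phi}$; the paper's route is less example-specific, as it exploits neither the affine structure of \eqref{traskip} nor the quadratic vanishing of $W$ at $h_0$. Two small repairs: in the case $\alpha=0$ the relevant fact is not ``unique ergodicity'' of $T$ but the uniqueness of its absolutely continuous invariant probability density (ergodicity with respect to Lebesgue), and the bound $\|L_T^n\mu\|_{L^1}\le C\rho^n\|\mu\|_{W^{1,1}}$ for $\mu\in V_s$ is a standard spectral-gap statement for $C^4$ expanding circle maps (the same fact behind the verification of assumption ${\bf (d)}$ in Lemma \ref{lemmassumpt}) and should be quoted as such; note it is slightly stronger than the bare $a_n\to 0$ of assumption ${\bf (d)}$, though any summable rate would suffice for your series.
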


\begin{proof}
We prove the uniqueness of the invariant measure $h_{0}$ in $L^{1}$ by using
the criterion established in Theorem 4 of \cite{G}.

In our setting, such statement requires as assumptions that once fixed some $%
\overline{\delta }>0$  we can verify that:

\begin{itemize}
\item for each $0\leq \delta <\overline{\delta }$ and $\phi \in P_{w}$ the
operators $L_{\delta ,\phi }$ have unique invariant probability density $%
h_{\delta ,\phi }\in P_{w}$ such that $\sup_{0\leq \delta <\overline{\delta }%
,\phi \in P_{w}}\{\|h_{\delta ,\phi }\|_{s}\}<\infty $, and

\item the operators $L_{\delta ,\phi }$ satisfy $(\ref{e1.5})$

\item there is $K_{2}\geq 1$ such that $\forall f_{1},f_{2}\in P_{w},0\leq
\delta <\overline{\delta }$ 
\begin{equation}
\|h_{\delta ,f_{1}}-h_{\delta ,f_{2}}\|_{L^{1}}\leq \delta
K_{2}\|f_{1}-f_{2}\|_{L^{1}}.  \label{lipp}
\end{equation}
\end{itemize}

Under these assumptions the statement establishes that for all $0\leq \delta
\leq \min (\overline{\delta },\frac{1}{K_{2}})$, there is a unique $%
h_{\delta }\in P_{w}$ \ such that%
\begin{equation*}
\mathcal{L}_{\delta }(h_{\delta })=h_{\delta }.
\end{equation*}

In Lemma \ref{lemmassumpt} we verified the first two items in the above
bullet list. Indeed the first item follows from the uniform Lasota Yorke
inequality $\bf{}(c).$ The second item is part of assumption $\bf{}(e)$. Hence, to establish
the result, it remains to verify $(\ref{lipp})$. This is standard from
the uniform spectral gap of the operators $L_{\delta ,\phi }$ when $\delta $
is small. It can be formalized similar to what is done in Section 7.4 of 
\cite{Gal}. We sketch the reasoning: the operators $L_{\delta ,\phi }$ as $%
\phi $ varies in $P_{w}$ satisfy a uniform Lasota-Yorke inequality and
furthermore there is $C\geq 0$ such that 
\begin{eqnarray}
\|(L_{\delta ,\phi _{1}}-L_{\delta ,\phi _{2}})\phi \|_{w} &\leq &\delta
C\|\phi _{1}-\phi _{2}\|_{w}\|\phi \|_{s}, \label{lop} \\
\|(L_{\delta ,\phi _{1}}-L_{\delta ,\phi _{2}})h_{\delta ,\phi _{1}}\|_{s}
&\leq &\delta C\|\phi _{1}-\phi _{2}\|_{w}.
\end{eqnarray}

This implies that there is $M_{2}\geq 0$ such that for each $\phi _{2}\in
P_{w}$, $\|(I-L_{\delta ,\phi _{2}})^{-1}\|_{V_{s}\rightarrow
V_{s}}\leq M_{2}$.

Thus we have:%
\begin{eqnarray*}
(I-L_{\delta ,\phi _{2}})(h_{\delta ,\phi _{2}}-h_{\delta ,\phi _{1}})
&=&h_{\delta ,\phi _{2}}-L_{\delta ,\phi _{2}}h_{\delta ,\phi
_{2}}-h_{\delta ,\phi _{1}}+L_{\delta ,\phi _{2}}h_{\delta ,\phi _{1}} \\
&=&(L_{\delta ,\phi _{1}}-L_{\delta ,\phi _{2}})h_{\delta ,\phi _{1}}
\end{eqnarray*}%
and we get%
\begin{equation*}
(h_{\delta ,\phi _{2}}-h_{\delta ,\phi _{1}})=(I-L_{\delta ,\phi
_{2}})^{-1}(L_{\delta ,\phi _{1}}-L_{\delta ,\phi _{2}})h_{\delta ,\phi
_{1}}.
\end{equation*}
Then%
\begin{equation}
\|h_{\delta ,\phi _{2}}-h_{\delta ,\phi _{1}}\|_{w}\leq \|h_{\delta ,\phi
_{2}}-h_{\delta ,\phi _{1}}\|_{s}\leq \delta C\|\phi _{1}-\phi _{2}\|_{w}.
\end{equation}%
and the statement is proved.
\end{proof}

The following proposition describes the behavior of the system in the strong
coupling behavior.

\begin{proposition}
Under the assumptions of Lemma \ref{lemmassumpt}, supposing furthermore that 
$\|\dot{\psi}_{\tilde{\phi}}\|_{w\rightarrow ss}<1$ then both $h_{0}$ and $%
\psi _{\tilde{\phi}}$ \ are stable fixed points of $\mathcal{L}_{\delta }$
having local exponential convergence to equilibrium. There are $r,C,\lambda
>0$ such that for each $g\in W^{1,1}$ with $\int gdx=0$ and $%
\|g\|_{W^{1,1}}\leq r$ we have%
\begin{equation*}
\|\mathcal{L}_{\delta }^{n}(h_{0}+g)-h_{0}\|_{W^{1,1}}\leq Ce^{-\lambda n}
\end{equation*}%
and%
\begin{equation*}
\|\mathcal{L}_{\delta }^{n}(\psi _{\tilde{\phi}}+g)-\psi _{\tilde{\phi}%
}\|_{W^{1,1}}\leq Ce^{-\lambda n}.
\end{equation*}
\end{proposition}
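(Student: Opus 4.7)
The plan is to apply Proposition \ref{fond} twice, once at each fixed point, and read off local exponential contraction. Lemma \ref{lemmassumpt} already gives assumptions $\bf{(a)},\dots,\bf{(f)}$ at both $h_0$ and $\psi_{\tilde\phi}$, so what remains is to verify the two extra hypotheses of Proposition \ref{fond}: boundedness of $\partial L_{\delta,h}:V_w\to B_{ss}$, and that some iterate of $d\mathcal{L}_{\delta,h}$ is a strict contraction on $V_s$.

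The case of $h_0$ is almost immediate. Since $W_{h_0}=0$ and $W$ is smooth of order $|g|_w^2$ in the increment, the computation in the proof of Lemma \ref{lemmassumpt} gives $\partial L_{\delta,h_0}=0$, which is trivially bounded from $V_w$ to $B_{ss}$. Therefore $d\mathcal{L}_{\delta,h_0}=L_{\delta,h_0}=L_T$ is just the transfer operator of the smooth expanding map $T$ on $V_s=V_{W^{1,1}}$; its classical spectral gap on the zero-average subspace provides an $n$ and a $\lambda<1$ with $\|L_T^n g\|_{W^{1,1}}\le \lambda \|g\|_{W^{1,1}}$ for all $g\in V_s$, which is exactly the second hypothesis of Proposition \ref{fond}.

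For $\psi_{\tilde\phi}$ the key point is that $\delta W_{\tilde\phi}>1$, so in a weak neighborhood of $\psi_{\tilde\phi}$ the operator has the simple form $L_{\delta,\phi}(f)=\psi_\phi\int f\,dx$. Applied to $g\in V_s$ (zero average) this gives $L_{\delta,\psi_{\tilde\phi}}(g)=0$, so $d\mathcal{L}_{\delta,\psi_{\tilde\phi}}(g)=\partial L_{\delta,\psi_{\tilde\phi}}(g)=\dot\psi_{\tilde\phi}(g)$. Boundedness of $\partial L_{\delta,\psi_{\tilde\phi}}:V_w\to B_{ss}$ is exactly the standing differentiability hypothesis on $\dot\psi_{\tilde\phi}$, and the strengthened assumption $\|\dot\psi_{\tilde\phi}\|_{w\to ss}<1$ combined with the norm hierarchy $\|\cdot\|_w\le\|\cdot\|_s\le\|\cdot\|_{ss}$ yields
\begin{equation*}
\|d\mathcal{L}_{\delta,\psi_{\tilde\phi}}(g)\|_s\le \|\dot\psi_{\tilde\phi}(g)\|_{ss}\le \|\dot\psi_{\tilde\phi}\|_{w\to ss}\,\|g\|_w\le \|\dot\psi_{\tilde\phi}\|_{w\to ss}\,\|g\|_s,
\end{equation*}
so already the first iterate contracts $V_s$.

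With both extra hypotheses verified at $h_0$ and at $\psi_{\tilde\phi}$, Proposition \ref{fond} applies and delivers local strong contraction at each fixed point, which by Definition \ref{defn:exp-dec} is exactly the local exponential convergence to equilibrium in $W^{1,1}$ stated in the proposition, with constants $r,C,\lambda$ depending on the corresponding fixed point. The only subtle point, and the one worth being careful about in writing the proof, is checking that a weak neighborhood of $\psi_{\tilde\phi}$ remains inside the region $\{\delta W_\phi>1\}$ so that the simple branch of \eqref{traskip} is the relevant one when computing the differential and iterating; this follows from continuity of $\phi\mapsto W_\phi$ in $L^1$ together with the strict inequality $\delta W_{\tilde\phi}>1$.
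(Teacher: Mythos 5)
Your proposal is correct and follows essentially the same route as the paper: apply Proposition \ref{fond} at each fixed point, using Lemma \ref{lemmassumpt} for assumptions $\bf{(a)}$--$\bf{(f)}$, with $\partial L_{\delta,h_0}=0$ so that $d\mathcal{L}_{\delta,h_0}=L_{\delta,h_0}=L_T$ contracts $V_s$ after some iterate by the spectral gap of the expanding map, and with $d\mathcal{L}_{\delta,\psi_{\tilde\phi}}=\dot\psi_{\tilde\phi}$ contracting $V_s$ thanks to $\|\dot\psi_{\tilde\phi}\|_{w\to ss}<1$ and the norm hierarchy. The only caveat, present in the paper as well, is that the second branch of \eqref{traskip} must be the active one near the fixed point $\psi_{\tilde\phi}$ (i.e. $\delta W_{\psi_{\tilde\phi}}>1$, not merely $\delta W_{\tilde\phi}>1$), which is implicit in Lemma \ref{lemmassumpt}.
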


\begin{proof}
We prove the statement by applying Proposition \ref{fond}, choosing $%
L^{1},W^{1,1},W^{2,1}$ as weak, strong and strongest space. In Lemma \ref%
{lemmassumpt} we proved that the system satisfies the assumptions $\bf{(a)}.,...,\bf{(f)}$ and we computed $\partial L_{\delta
,h_{0}}=0$, by this item $1)$ of Proposition \ref{fond} is automatically verified. We then have that the differential at $h_{0}$ is the function associating to $g\in
V_{s}$ \ the value%
\begin{eqnarray*}
d\mathcal{L}_{\delta ,h_{0}}(g) &=&L_{\delta ,h_{0}}(g)+\partial L_{\delta
,h_{0}}(g) \\
&=&L_{\delta ,h_{0}}(g).
\end{eqnarray*}

By this we know that one iterate of the differential will contract the zero average space and then we can  apply Proposition \ref{fond}.
obtaining that  $h_{0}$ \ is a stable fixed point
of $\mathcal{L}_{\delta }$ and we have local exponential contraction,
proving the first part of the the main claim.\\
On the other hand considering the other fixed point $\psi _{\tilde{\phi}}$, by \eqref{traskip} %
\begin{eqnarray*}
d\mathcal{L}_{\delta ,\psi _{\tilde{\phi}}}(g) &=&L_{\delta ,\psi _{\tilde{%
\phi}}}(g)+\dot{\psi}_{\tilde{\phi}}(g) \\
&=&[0+\dot{\psi}_{\tilde{\phi}}(g)] \\
&=&\dot{\psi}_{\tilde{\phi}}(g),
\end{eqnarray*}
since we suppose $\|\dot{\psi}_{\tilde{\phi}}\|_{w\rightarrow ss}<1$ then  $%
\partial L_{\delta ,h_{0}}:V_w\rightarrow B_{ss}$ is bounded,
furthermore $d\mathcal{L}_{\delta ,\psi _{\tilde{\phi}}}$ is a contraction.
This shows that $\psi_{\tilde{\phi}}$ is  a stable fixed point and the second part of the claim.
\end{proof}

\begin{remark} We remark that when $\delta W_{\tilde{\phi}}>1$ and $\sigma$ (see \eqref{pzi}) is small enough the example given in \eqref{pzi} does not satisfy $\|\dot{\psi}_{\tilde{\phi}}\|_{w\to ss}<1$ but still $\delta W_{\psi_{\tilde{\phi}}}>1$. In this case since the barycenter of $\tilde{\phi}$ is the same of ${\psi}_{\tilde{\phi}}$, then ${\psi}_{\tilde{\phi}}={\psi}_{\psi_{\tilde{\phi}}},$  and ${\psi}_{\tilde{\phi}}$ is a (indifferent) fixed point for the system.
\end{remark}

\appendix

\section{Exponential loss of memory for sequential composition of
operators}

In this section, we present a relatively simple and general argument establishing exponential loss of memory for a sequential composition of Markov operators converging to a limit. Since the approach is general, we will work within an abstract framework, stating a result that holds for a sequence of Markov operators acting on suitable spaces of measures.

Let $B_{w}$ and $B_{s}$ be normed vector subspaces of the set of signed measures on $%
X $. Suppose $(B_{s},\|~\|_{s})\subseteq $ $(B_{w},\|~\|_{w})$ and $%
\|~\|_{s}\geq \|~\|_{w}$. Let us consider a sequence of linear Markov
operators $\{L_{i}\}_{i\in \mathbb{N}}:B_{s}\rightarrow B_{s}.$ \ We will
suppose furthermore that the following assumptions are satisfied by the $%
L_{i}$:

\begin{itemize}
\item[$(ML1)$] The operators $L_{i}$ satisfy a common Lasota-Yorke inequality.
There are constants $A,B,M,\lambda _{1}\geq 0$ with $\lambda _{1}<1\leq B $ such
that for all $f\in B_{s},$ $\mu \in P_{w},$ $i,n\in \mathbb{N}$%
\begin{equation} \label{1}
\begin{split}
\|L_{i+n}\circ ...\circ L_{i+1}(f)\|_{w}&\leq M \|f\|_{w} \\ 
\|L_{i+n}\circ ...\circ L_{i+1}(f)\|_{s}&\leq A\lambda
_{1}^{n}\|f\|_{s}+B\|f\|_{w}.%
\end{split} 
\end{equation}

\item[$(ML2)$] There is $\epsilon >0$ and a Markov operator $
L_{0}:B_{s}\rightarrow B_{s}$ such that : $\forall j>0$ 
\begin{equation}
\|(L_{0}-L_{j})\|_{B_{s}\rightarrow B_{w}}\leq \epsilon .
\end{equation}

\item[$(ML3)$] There exists a sequence of real and positive numbers $a_{n}\geq 0$ with $a_{n}\rightarrow 0$ such that
for all $n\in \mathbb{N}$ and $v\in V_{s}$
\begin{equation} \label{3}
\|L_{0}^{n}(v)\|_{w}\leq a_{n}\|v\|_{s} ,
\end{equation}%
where%
\begin{equation*}
V_{s}=\{\mu \in B_{s}|\mu (X)=0\}.
\end{equation*}
\end{itemize}

We remark that assumption $(ML1)$ implies that the family of operators $%
L_{i}$ is uniformly bounded when acting on $B_{s}$ and on $B_{w}.$

The following lemma is an estimate for the distance of the sequential
composition of operators from the iterates of $L_0$.

\begin{lemma}
\label{XXX}Let $\delta \geq 0$ and let $L^{(j+1,j+n)}:=L_{j+n}\circ ...\circ
L_{j+1}$ be a sequential composition of operators $\{L_{i}\}_{i\in \mathbb{N}%
}$ \ satisfying $(ML1),(ML2)$ and $(ML3)$. Let $L_{0}$ be such that $%
\|L_{i}-L_{0}\|_{s\rightarrow w}\leq \delta $ for each $i\in \mathbb{N}$.\ Then there is $C\geq 0$ such
that, for each $ g\in B_{s}$ and $j,n\geq 1$ it holds%
\begin{equation}
\|L^{(j,j+n-1)}g-L_{0}^{n}g\|_{w}\leq M^n \delta (C\|g\|_{s}+n\frac{B}{1-\lambda 
}\|g\|_{w}),  \label{2}
\end{equation}%
where $B$ is the constant in \eqref{1}.
\end{lemma}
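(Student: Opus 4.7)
The plan is to prove this by a standard telescoping argument. Writing the product as a chain of $n$ operators and inserting intermediate products in which some of the $L_{j+i}$'s are replaced by $L_0$, one gets the identity
\begin{equation*}
L^{(j,j+n-1)}g - L_0^n g = \sum_{m=0}^{n-1} \bigl[L_{j+n-1}\circ\cdots\circ L_{j+m+1}\bigr]\circ (L_{j+m}-L_0)\circ L_0^m\, g,
\end{equation*}
where the empty composition for $m=n-1$ is interpreted as the identity. This reduces the problem to estimating the weak norm of each term of the sum.

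Each summand factors into three pieces to which the three assumptions apply directly. First, by $(ML1)$ the sequential composition $L_{j+n-1}\circ\cdots\circ L_{j+m+1}$ (of at most $n-m-1$ factors from the family) is bounded as a map $B_w\to B_w$ uniformly, by $M$. Second, the hypothesis $\|L_{j+m}-L_0\|_{s\to w}\le \delta$ gives $\|(L_{j+m}-L_0)(L_0^m g)\|_w \le \delta \|L_0^m g\|_s$. Third, applying the Lasota--Yorke bound from $(ML1)$ to the iterates of $L_0$ (which inherits such an inequality either by assumption or as the $s\to w$ limit of the $L_j$'s), one obtains $\|L_0^m g\|_s\le A\lambda_1^m\|g\|_s + B\|g\|_w$.

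Combining these three estimates term by term in the telescoping sum yields
\begin{equation*}
\|L^{(j,j+n-1)}g - L_0^n g\|_w \le M\,\delta \sum_{m=0}^{n-1}\bigl(A\lambda_1^m\|g\|_s + B\|g\|_w\bigr)\le M\,\delta\left(\frac{A}{1-\lambda_1}\|g\|_s + nB\|g\|_w\right),
\end{equation*}
where we used the geometric series for the strong-norm contribution and the trivial bound for the weak-norm contribution. Setting $C:=A/(1-\lambda_1)$ and absorbing $M$ into $M^n$ (which is certainly larger since $M\ge 1$) gives the stated inequality.

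The only subtle point is the applicability of a Lasota--Yorke inequality to the iterates of $L_0$ itself: $(ML1)$ is stated for compositions of the operators $L_i$ with $i\ge 1$, while $L_0$ is introduced separately in $(ML2)$. This is not a real obstacle because $(ML2)$ shows $L_0$ is an $s\to w$ limit of the $L_j$'s, so by a standard semicontinuity argument the same inequality passes to $L_0$; alternatively, one may simply include this property as part of the standing assumption, as is tacitly done in the applications of this lemma (e.g., in Proposition~\ref{fond}, where $L_0=L_{\delta,h}$ manifestly satisfies $(ML1)$ by $\bf{(c)}$). All other steps are routine.
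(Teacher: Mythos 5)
Your telescoping argument is correct and is essentially the paper's own proof: the paper obtains the same decomposition by induction on $n$ (inserting $L_0$ one factor at a time) and uses the same three ingredients — the uniform bound from $(ML1)$ on sequential compositions, the $s\to w$ closeness $\delta$ applied to the swapped factor, and the Lasota--Yorke inequality applied to $L_0^m$, the latter being used just as tacitly there as in your proof (with $L_0$ understood to satisfy $(ML1)$, as it does in the applications where $L_0=L_{\delta,h}$). Your one-shot bound of the outer composition by $M$ even yields a marginally sharper constant than the paper's accumulated $M^n$, which is harmless since $M\ge 1$ for the Markov operators in the intended setting, so the stated inequality follows.
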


\begin{proof}
We proceed by induction on $n\ge 1$. By the assumptions we get%
\begin{equation*}
\|L_{0}g-L_{j}g\|_{w}\leq \delta \|g\|_{s},
\end{equation*}
therefore the case $n=1$ of $(\ref{2})$ is trivial. \ Let us now suppose
inductively that, for each $n \ge 2$, there exists $C_n>0$ such that%
\begin{equation*}
\|L^{(j,j+n-2)}g-L_{0}^{n-1}g\|_{w}\leq M^{n-1}\delta \left(C_{n-1}\|g\|_{s}+(n-1)\frac{B%
}{1-\lambda _{1}}\|g\|_{w}\right),
\end{equation*}%
for some $0<\lambda_1<1$. Then,%
\begin{equation*}
\begin{split}
&\|L_{j+n-1}L^{(j,j+n-2)}g-L_{0}^{n}g\|_{w} \le\\
&\leq \|L_{j+n-1}L^{(j,j+n-2)}g-L_{j+n-1}L_{0}^{n-1}g+L_{j+n-1}L_{0}^{n-1}g-L_{0}^{n}g\|_{w}
\\
&\leq \|L_{j+n-1}L^{(j,j+n-2)}g-L_{j+n-1}L_{0}^{n-1}g\|_{w}+\|L_{j+n-1}L_{0}^{n-1}g-L_{0}^{n}g\|_{w} \\
&\leq M^n\delta \left(C_{n-1}\|g\|_{s}+(n-1)\frac{B}{1-\lambda _{1}}\|g\|_{w}\right) +\|[L_{j+n-1}-L_{0}](L_{0}^{n-1}g)\|_{w} \\
&\leq M^n \delta \left( C_{n-1}\|g\|_{s}+(n-1)\frac{B}{1-\lambda _{1}}%
\|g\|_{w}\right)+\delta \|L_{0}^{n-1}g\|_{s} \\
&\leq M^n\delta \left(C_{n-1}\|g\|_{s}+(n-1)\frac{B}{1-\lambda _{1}}\|g\|_{w}\right) +\delta \left(A\lambda _{1}^{n-1}\|g\|_{s}+\frac{B}{1-\lambda _{1}}\|g\|_{w}\right) \\
&\leq M^n\delta \bigg[ (C_{n-1}+A\lambda _{1}^{n-1})\|g\|_{s}+n\frac{B}{%
1-\lambda _{1}}\|g\|_{w}\bigg].
\end{split}
\end{equation*}

Since $\lambda_1<1$, iterating the above inequality yields a bound of $C_{n-1}$ by a geometric series, from which the statement follows.
\end{proof}

\begin{lemma}
\label{losmem} Let $L_{i}$ be a sequence of operators satisfying $(ML1),(ML2)$ and $(ML3)$. Then the sequence $L_{i}$ has
strong exponential loss of memory in the following sense. There are $C,\lambda > 0$ such that for each $\epsilon>0$  small enough \footnote{\label{notaunif}We stress that the coefficients $C,\lambda $ depend on $ \lambda_1 , A,B,M$ and the sequence $a_n$ in the assumptions $(ML1)...(ML3)$ and not on $\epsilon$ and the chosen sequence of operators $L_i$ satisfying the assumptions $(ML1)...(ML3)$ themselves, provided $\epsilon$ is chosen small enough. More precisely, given $ \lambda_1 , A,B,M\geq 0$  there are $\epsilon_0, C,\lambda$ such that for each $0\geq \epsilon \geq \epsilon_0$ and a family of operators satisfying $(ML1),...,(ML3)$ with paremeters  $\lambda_1 , A,B,\epsilon$ we have the loss of memory with paremeters $C,\lambda$. 
}
and for each $j,n\in \mathbb{N}$ and $g\in V_{s}$, 
\begin{equation*}
\|L^{(j,j+n-1)}g\|_{s}\leq Ce^{-\lambda n}\|g\|_{s}.
\end{equation*}
\end{lemma}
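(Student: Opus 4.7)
The plan is to combine the weak-norm convergence to equilibrium of $L_{0}$ on $V_{s}$ given by $(ML3)$ with the closeness of the $L_{i}$ to $L_{0}$ in strong-to-weak norm, invoked through Lemma \ref{XXX}, to obtain decay in the weak norm for any sequential composition acting on a zero-average measure. A Hennion / Doeblin--Fortet type argument, combining this weak-norm decay with the uniform Lasota--Yorke inequality $(ML1)$, then upgrades this to strong-norm contraction after a sufficiently large block of iterates, which can be chained together to produce the desired exponential decay.

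\emph{Step 1 (weak-norm decay on $V_{s}$).} For $g \in V_{s}$, each Markov operator preserves the zero-average condition, so the intermediate images $L^{(j,j+n-1)}g$ all remain in $V_{s}$. Applying the triangle inequality and then $(ML3)$ together with Lemma \ref{XXX} (with $\delta$ taken to be the $\epsilon$ of $(ML2)$) gives, using $\|g\|_{w}\le \|g\|_{s}$,
\[
\|L^{(j,j+N-1)}g\|_{w}\le \|L_{0}^{N}g\|_{w}+\|L^{(j,j+N-1)}g-L_{0}^{N}g\|_{w}\le \tilde a_{N}(\epsilon)\,\|g\|_{s},
\]
where $\tilde a_{N}(\epsilon):=a_{N}+M^{N}\epsilon\!\left(C+N\tfrac{B}{1-\lambda_{1}}\right)$.

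\emph{Step 2 (strong-norm contraction after a block).} Split the composition as $L^{(j+N,j+2N-1)}\circ L^{(j,j+N-1)}$ and apply $(ML1)$ on the outer $N$ factors, then once more on the inner $N$ factors:
\[
\|L^{(j,j+2N-1)}g\|_{s}\le A\lambda_{1}^{N}\|L^{(j,j+N-1)}g\|_{s}+B\tilde a_{N}(\epsilon)\|g\|_{s}\le \eta_{N}(\epsilon)\,\|g\|_{s},
\]
with $\eta_{N}(\epsilon):=A\lambda_{1}^{N}(A\lambda_{1}^{N}+B)+B\tilde a_{N}(\epsilon)$. Here the order of quantifiers is crucial: first fix $N$ large enough that $A\lambda_{1}^{N}(A\lambda_{1}^{N}+B)+Ba_{N}<1/2$, which is possible since $\lambda_{1}<1$ and $a_{n}\to 0$; then choose $\epsilon_{0}$ so small that $BM^{N}\epsilon_{0}(C+NB/(1-\lambda_{1}))\le 1/4$. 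For every $\epsilon\le \epsilon_{0}$ this yields $\eta:=\eta_{N}(\epsilon)<1$, uniformly in $j$ and in the particular sequence $\{L_{i}\}$ satisfying $(ML1)$--$(ML3)$ with these parameters.

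\emph{Step 3 (iteration and arbitrary $n$).} Since $L^{(j,j+2N-1)}g$ still lies in $V_{s}$, Step 2 can be applied to consecutive blocks of length $2N$, giving $\|L^{(j,j+2kN-1)}g\|_{s}\le \eta^{k}\|g\|_{s}$ for every $k\in\mathbb{N}$. For arbitrary $n$, write $n=2kN+r$ with $0\le r<2N$ and absorb the remaining $r$ steps by $(ML1)$, obtaining $\|L^{(j,j+n-1)}g\|_{s}\le (A+B)\eta^{k}\|g\|_{s}$; this is the claimed bound with $\lambda:=-\log\eta/(2N)$ and $C:=(A+B)\eta^{-1}$.

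\emph{Expected main obstacle.} The only delicate point is the order of parameter choices in Step 2: the perturbation error produced by Lemma \ref{XXX} carries a factor $M^{N}$ that blows up in $N$, so $N$ must be fixed before $\epsilon$. This is precisely the reason the final constants $C$ and $\lambda$ depend only on the abstract parameters $A,B,M,\lambda_{1}$ and on the sequence $a_{n}$, and not on the particular sequence $\{L_{i}\}$ or on $\epsilon$ itself, in agreement with footnote \ref{notaunif}.
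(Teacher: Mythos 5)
Your proof is correct and follows essentially the same route as the paper: Lemma \ref{XXX} together with $(ML3)$ gives weak-norm smallness of a block of length $N$, the Lasota--Yorke inequality $(ML1)$ applied over a second block upgrades this to a strong-norm contraction over $2N$ steps (with $N$ fixed before $\epsilon$, exactly the quantifier order the paper uses), and chaining blocks yields the exponential rate. The only difference is cosmetic: you make the remainder step $n=2kN+r$ and the explicit constants $\eta_N(\epsilon)$ precise, which the paper leaves implicit with its $1/100$-type choices.
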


\begin{proof}
By $(ML1)$, for each $ j,i\geq 1$ and $g\in
B_{s}$,
\begin{equation*}
\|L^{(j,j+i)}(g)\|_{s}\leq \left(\frac{B}{1-\lambda _{1}}+1\right)\|g\|_{s}.
\end{equation*}%
Now let us consider $N_{0} \in \mathbb{N}$ such that 
\[
A\lambda _{1}^{N_{0}}\leq \frac{1}{100\left(%
\dfrac{B}{1-\lambda _{1}}+1\right)}.
\]
By $(ML3)$, there is $N_{2}\in \mathbb{N}$ such that, for each $
i\geq N_{2} $ and $g\in V_{s}$ ,
\begin{equation*}
\|{L_{0}}^{N_{2}}g\|_{w}\leq \frac{1}{100B}\|g\|_{s}.
\end{equation*}%
Let $\bar N:=\max (N_{0},N_{2})$ and let $\epsilon \leq \frac{(1-\lambda _{1})}{%
100\bar N M^{\bar N}B({C+B})}$. Then%
\begin{equation*}
\|L_{i}-L_{0}\|_{s\rightarrow w}\leq \frac{(1-\lambda _{1})}{100[\bar N M^{\bar N}]B({C+B})}.
\end{equation*}%
By $(\ref{2})$, for each $j\geq 1,i\leq \bar N$ ,
\begin{equation*}
\begin{split}
\|L^{(j,j+i-1)}g-{L_{0}}^{i}g\|_{w}& \leq \frac{(1-\lambda _{1})}{100\bar NM^{\bar N}B(C+B)}M^i
\left(C\|g\|_{s}+i\frac{B\|g\|_{w}}{(1-\lambda _{1})}\right) \\
& \leq \frac{iM^i}{100\bar NM^{\bar N}B}\|g\|_{s} \\
& \leq \frac{1}{100 B}\|g\|_{s}.
\end{split}%
\end{equation*}%
Hence,
\begin{equation}
\begin{split}
\|L^{(j,j+\bar N-1)}g\|_{w}& \leq \|{L_{0}}^{\bar N}g\|_{w}+\frac{1}{100B}\|g\|_{s} \\
& \leq \frac{1}{100B}\|g\|_{s}+\frac{1}{100B}\|g\|_{s}.
\end{split}%
\end{equation}
Applying the Lasota-Yorke inequality \eqref{1} we get, for any $j\geq 1$,
\begin{equation}
\begin{split}
\|L^{(j,j+2\bar N-1)}g\Vert _{s}& \leq A\lambda
_{1}^{-\bar N}\|L^{(j,j+\bar N-1)}g\|_{s}+B\Vert L^{(j,j+\bar N-1)}g\Vert _{w} \\
& \leq \frac{1}{100}\|g\|_{s}+\frac{B}{100B}\|g\|_{s}+\frac{B}{100B}%
\|g\|_{s} \\
& \leq \frac{3}{100}\|g\|_{s}
\end{split}%
\end{equation}%
and 
\begin{equation*}
\|L^{(j,j+2k \bar N-1)}g\Vert _{s}\leq \left(\frac{3}{100}\right)^{k}\|g\|_{s}
\end{equation*}%
for each $j\geq 1$, $k\geq 1$ and $g\in V_{s}$, establishing the result.
\end{proof}

\

\noindent \textbf{Acknowledgements}

 S.G. acknowledges the MIUR Excellence Department Project awarded to the
Department of Mathematics, University of Pisa, CUP I57G22000700001. S.G. was
partially supported by the research project "Stochastic properties of
dynamical systems" (PRIN 2022NTKXCX) of the Italian Ministry of Education
and Research. M.T. acknowledges Marie Slodowska-Curie Actions: "Ergodic
Theory of Complex Systems", project no. 843880. The authors acknowledge the UMI Group “DinAmicI” ({www.dinamici.org}) and the INdAM group GNFM.
The authors wish to thank B. Fernandez for useful discussions during the preparation of the work.
\\
\textbf{Declarations}

The authors have no relevant financial or non-financial interests to disclose. 
Data sharing not applicable to this article as no datasets were generated or analysed during the current study.
















\begin{thebibliography}{99}

\bibitem{BC} 
    V.~Baladi, R.~Castorrini.
    \emph{Thermodynamic formalism for transfer operators of piecewise expanding maps in finite dimension.}  Discrete Contin. Dyn. Syst., 44, 2169--2192 (2024).



\bibitem{BK} W.Bahsoun, A. Korepanov, \emph{Statistical aspects of mean field coupled intermittent maps.} Ergodic Theory and Dynamical Systems 44.4 (2024): 945-957.

\bibitem{BL} W. Bahsoun, C. Liverani, \emph{Mean field coupled
dynamical systems: Bifurcations and phase transitions }arXiv:2402.11076


\bibitem{BLS} W. Bahsoun, C. Liverani, F.M. S\'{e}lley \emph{Globally coupled
Anosov diffeomorphisms: statistical properties} Comm.
Math. Phys. 400 (3), 1791-1822

\bibitem{bal} P. Balint, G. Keller, F. M. Selley, I.\ P. Toth, \emph{%
Synchronization versus stability of the invariant distribution for a class
of globally coupled maps. }Nonlinearity, 31(8), 3770. (2018)

\bibitem{BKLZ} J.B. Bardet, G. Keller, R. Zweim\"{u}ller. \emph{%
Stochastically stable globally coupled maps with bistable thermodynamic
limit.} Comm. Math. Phys. 292 (1), :237--270, (2009).

\bibitem{Bl11} M. L. Blank. \emph{Self-consistent mappings and systems of
interacting particles.} Doklady Math., 83, pp 49--52. Springer, (2011).


\bibitem{Bla} M. L. Blank. \emph{Ergodic averaging with and without
invariant measures.} Nonlinearity, 30(12):4649, (2017).

\bibitem{BCC} Butterley, O., Canestrari, G., Castorrini, R. \emph{Discontinuities Cause Essential Spectrum on Surfaces}. Ann. Henri Poincaré (2024). https://doi.org/10.1007/s00023-024-01499-y.

\bibitem{CF} J-R. Chazottes, B. Fernandez.\emph{\ Dynamics of coupled map
lattices and of related spatially extended systems}, v. 671. Springer
Science \& Business Media, (2005).


\bibitem{CL} 
R.~Castorrini, C.~Liverani.
\emph{Quantitative statistical properties of two-dimensional partially hyperbolic systems.}  
Adv. Math., 409, Part A, 1--122 (2022).
    
    

\bibitem{F2} H. Dietert, B. Fernandez \emph{The mathematics of asymptotic
stability in the Kuramoto model.} Proceedings of the Royal Society A,
474(2220), 20180467. (2018)


\bibitem{Fe} B. Fernandez, \emph{Breaking of ergodicity in expanding systems
of globally coupled piecewise affine circle maps}. Journal of Statistical
Physics 154, 4 , 999-1029 (2014).


\bibitem{Gal} S. Galatolo, \emph{Statistical properties of dynamics. Introduction to the functional analytic approach} arXiv:1510.02615
, 


\bibitem{G} S. Galatolo, \emph{Self-Consistent Transfer Operators: Invariant
Measures, Convergence to Equilibrium, Linear Response and Control of the
Statistical Properties.} Commun. Math. Phys. 395, 715--772 (2022).
https://doi.org/10.1007/s00220-022-04444-4

\bibitem{GNS} S. Galatolo, I.  Nisoli, B. Saussol. \emph{An elementary way to rigorously estimate convergence to equilibrium
and escape rates.} Journal of Computational Dynamics, 2015, 2(1): 51-64. doi: 10.3934/jcd.2015.2.51

\bibitem{Just} W. Just, \ \emph{Globally coupled maps: phase transitions and synchronization}. Physica D: Nonlinear Phenomena 81.4 (1995): 317-340.

\bibitem{Just2} W. Just, \emph{On the collective motion in globally coupled chaotic systems.} Physics reports 290.1-2 (1997): 101-110.

\bibitem{Kan} K. Kaneko. \emph{Globally coupled chaos violates the law of
large numbers but not the central limit theorem}. Physical review letters,
65(12):1391 (1990).

\bibitem{Kan2} Kaneko, Kunihiko. \ \emph{Remarks on the mean field dynamics of networks of chaotic elements}. Physica D: Nonlinear Phenomena 86.1-2 (1995): 158-170.

\bibitem{K} G. Keller. \emph{An ergodic theoretic approach to mean field
coupled maps.} In Fractal Geometry and Stochastics II, pages 183--208.
Springer, (2000).

\bibitem{KL} G. Keller, C. Liverani. \emph{Stability of the spectrum for transfer operators.} Ann. Scuola Norm. Sup. Pisa Cl. Sci. (4) 28.1, pp. 141-152, (1999).

\bibitem{NK} Nakagawa, Naoko, and Yoshiki Kuramoto. \ \emph{Collective chaos in a population of globally coupled oscillators.} Progress of Theoretical Physics 89.2 (1993): 313-323.

\bibitem{PK} Pikovsky, Arkady S., and Jürgen Kurths. \ \emph{Do globally coupled maps really violate the law of large numbers?.} Physical review letters 72.11 (1994): 1644.

\bibitem{SB} F. Sélley, P. Bálint. \ \emph{Mean-field coupling of identical expanding circle maps.} Journal of Statistical Physics 164.4 (2016): 858-889.


\bibitem{ST} F. M. S\'{e}lley, M. Tanzi, \ \emph{Linear Response for a Family
of Self-Consistent Transfer Operators \ }Comm. Math. Phys. 382(3), 1601-1624
(2021).



\bibitem{ST2}  F. Sélley, M. Tanzi, \emph{Synchronization for networks of globally coupled maps in the thermodynamic limit.} Journal of Statistical Physics 189.1 (2022): 16.

\bibitem{BUMI} M. Tanzi, \emph{Mean-field coupled systems and self-consistent transfer operators: a review.} Bollettino dell'Unione Matematica Italiana 16.2 (2023): 297-336.

\bibitem{wo} C. Wormell,  \emph{Non-hyperbolicity at large scales of
a high-dimensional chaotic system }Proc. R. Soc. A.47820210808
http://doi.org/10.1098/rspa.2021.0808 \ (2022)
\end{thebibliography}
\end{document}